\documentclass{amsart}
\usepackage{graphicx}
\usepackage{newlattice}

\DeclareMathOperator{\length}{length}

\newcommand{\fp}{\F p}
\newcommand{\fq}{\F q}

\newcommand{\oa}{\ol \bga}

\theoremstyle{plain}
\newtheorem{theorem}{Theorem}

\newtheorem{lemma}[theorem]{Lemma}
\newtheorem{corollary}[theorem]{Corollary}

\theoremstyle{definition}

\begin{document}
\title[Congruences of the fork extensions. II. The congruence gamma]
{Congruences of the fork extensions. II.\\
The congruence gamma}  
\author{G. Gr\"{a}tzer} 
\address{Department of Mathematics\\
  University of Manitoba\\
  Winnipeg, MB R3T 2N2\\
  Canada}
\email[G. Gr\"atzer]{gratzer@me.com}
\urladdr[G. Gr\"atzer]{http://server.maths.umanitoba.ca/homepages/gratzer/}

\date{\today}
\subjclass[2010]{Primary: 06C10. Secondary: 06B10.}
\keywords{semimodular lattice, fork extension, congruence.}

\begin{abstract}
G.~Cz\'edli and E.\,T.~Schmidt introduced in 2012 
the fork extension.
Continuing from Part I,  we investigate 
the congruences of a fork extension.
\end{abstract}

\maketitle

\section{Introduction}\label{S:Introduction}

\begin{figure}[b!]
\centerline{\includegraphics{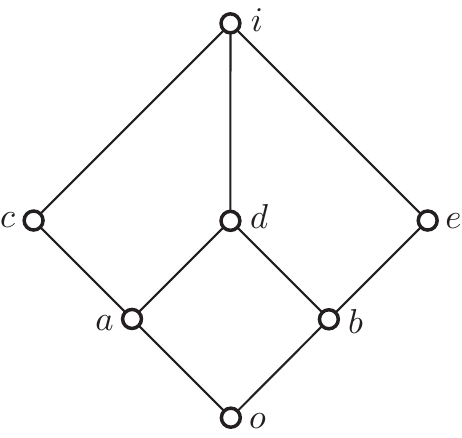}}
\caption{The lattice $\SfS 7$}\label{F:s7}
\end{figure}

Let $L$ be a slim, planar, semimodular lattice, an \emph{SPS lattice}.
As in G.~Cz\'edli and E.\,T.~Schmidt~\cite{CSb}, 
\emph{inserting a fork} to $L$ at the covering square~$S$, 
firstly, replaces~$S$ by a~copy of $\SfS 7$ 
(see the lattice $\SfS 7$ in Figure~\ref{F:s7}). 

Secondly, if there is a chain 
$u\prec v\prec w$ such that~the element $v$ has just been added 
and 
$
   T = \set{x=u \mm z, z, u, w=z \jj u}
$
is a covering square in the lattice~$L$ 
(and so $u \prec v \prec w$ is not on the boundary of $L$) 
but $x \prec z$ at the present stage of the construction,
then we insert a new element~$y$ 
such that $x \prec y \prec z$ and $y \prec v$.

Let $L[S]$ denote the lattice we obtain when the procedure terminates. 
We say that $L[S]$ is obtained from
$L$ by \emph{inserting a fork to $L$} at the covering square~$S$.
See Figure~\ref{F:forks} for an illustration.

\begin{figure}[hbt]
\centerline{\includegraphics{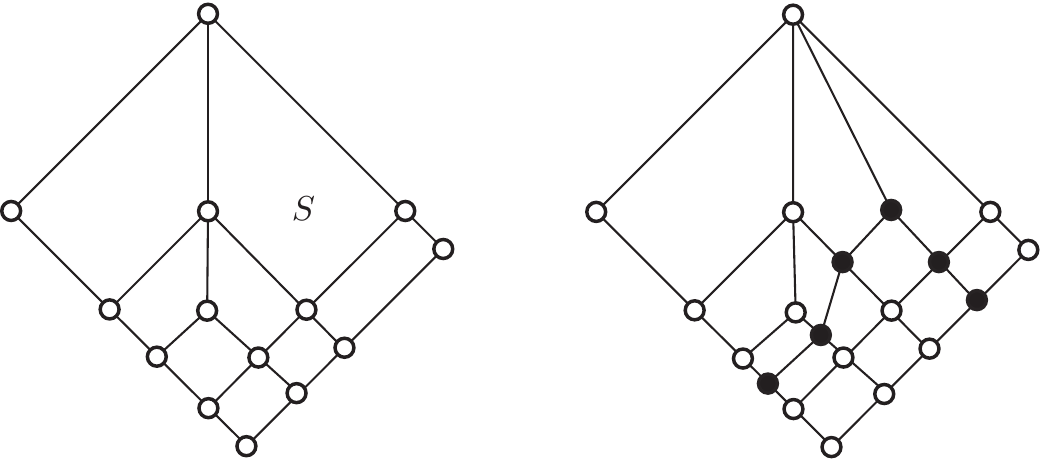}}
\caption{Inserting a fork at $S$}\label{F:forks}
\end{figure}

Let $L$ be an SPS lattice
and $S$ a covering square of $L$.
In~this series of research notes we examine the connections between the congruence lattice of $L$ and the congruence lattice of $L[S]$.
In Part I, G. Gr\"atzer \cite{gGa}, we proved the following result:

\begin{theorem}\label{T:extension}
Let $L$ be a slim, semimodular, planar lattice. 
Let $S$ be a covering square of $L$.
Then every congruence of $L$ extends to $L[S]$.
\end{theorem}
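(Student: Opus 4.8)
The plan is to fix a congruence $\alpha$ of $L$ and to exhibit a congruence of $L[S]$ that restricts to it. I use throughout that the inclusion $L \to L[S]$ is a lattice embedding, so that $L$ is a sublattice of $L[S]$ and the assertion to be proved reads: there is a congruence $\beta$ of $L[S]$ with $\beta \cap (L \times L) = \alpha$. The natural candidate is $\beta = \operatorname{con}_{L[S]}(\alpha)$, the congruence of $L[S]$ generated by the pairs of $\alpha$. Because $L$ is a sublattice, $\beta \cap (L\times L)$ is a congruence of $L$ that contains $\alpha$, so the inclusion $\alpha \subseteq \beta \cap (L\times L)$ is automatic. The whole content of the theorem is the opposite inclusion: for $a, b \in L$ one must show that $a \mathrel\beta b$ forces $a \mathrel\alpha b$. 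I call this the \emph{no-spillover} statement.

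To get hold of $\beta$ I would invoke the structure theory of finite semimodular lattices. By Cz\'edli--Schmidt and Part~I the lattice $L[S]$ is again slim, planar and semimodular, so a prime interval of $L[S]$ is collapsed by $\beta$ exactly when a chain of up- and down-perspectivities across covering squares joins it to a prime interval that subdivides an $\alpha$-collapsed covering pair of $L$; equivalently, $\beta$ collapses exactly the prime intervals whose trajectory is reachable from one collapsed by $\alpha$. Since a congruence of a finite lattice is determined by the covering pairs it collapses, it is enough to treat a covering pair $a \prec b$ of $L$; note, however, that such a pair need not remain prime in $L[S]$, as the propagation may insert an element $y$ with $a \prec y \prec b$. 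Tracking the prime intervals that subdivide $[a,b]$ in $L[S]$, the no-spillover statement becomes the following assertion about trajectories: \emph{inserting the fork does not merge two distinct trajectories of $L$.} New trajectories among the freshly inserted prime intervals are harmless, and it is precisely the merging of old colors that must be excluded.

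The core of the proof, and the step I expect to be the main obstacle, is this non-merging assertion. I would establish it by a local analysis of the only two mechanisms that introduce new elements: the replacement of $S$ by a copy of $\SfS 7$, and each propagation step, in which a single $y$ is inserted into a covering square $T$ of the lattice built so far. For each new element I would list the covering squares of $L[S]$ that contain it and read off the perspectivities they create, using semimodularity to see that any perspectivity chain entering a new element must leave it along a matched covering square, and planarity to keep the left and right sides of each square separated. The outcome to be verified is that every new prime interval is perspective only to prime intervals already lying on a single trajectory of $L$, so that it either starts a genuinely new color or merely subdivides or prolongs an old one; whence any perspectivity chain in $L[S]$ between prime intervals of $L$ can be rerouted to a chain inside $L$. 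The delicate bookkeeping is the propagation: one must check that the elements $y$ added to repair the squares $T$ never splice together two trajectories of $L$ that $\alpha$ keeps apart, and this is exactly where the insertion rule---the chain $u \prec v \prec w$ with $T$ a covering square but $x \prec z$ at that stage---must be used to control which prime intervals the new $y$ connects.
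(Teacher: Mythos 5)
First, a point of reference: this paper does not prove Theorem~\ref{T:extension}; it is quoted from Part~I \cite{gGa}, so there is no in-text proof to match your argument against. Judged on its own, your proposal identifies the right target --- show that $\beta=\con{\bga}$ computed in $L[S]$ restricts to $\bga$ on $L$, which reduces (since a congruence of a finite lattice is determined by the covering pairs it collapses) to showing that a congruence-projectivity in $L[S]$ between covering pairs of $L$ can be rerouted inside $L$. That reduction is correct and is essentially the standard frame for such congruence extension arguments (compare Lemma~\ref{L:cproj} and the prime-interval condition (P) of Lemma~\ref{L:technical2}).

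The difficulty is that everything after the reduction is announced rather than proved. The ``no-spillover'' or ``non-merging'' assertion --- that every new prime interval of $L[S]$ is congruence-perspective only into a single trajectory of $L$, so that no chain $\fp \cproj \fq$ in $L[S]$ connects covering pairs of $L$ that were not already connected in $L$ --- \emph{is} the theorem; your text says ``I would establish it by a local analysis'' and ``the outcome to be verified is \dots'' without carrying out any of the case analysis. That analysis is genuinely nontrivial: for example, already inside the $\SfS 7$ replacing $S$, collapsing $[o,a_l]$ in $L[S]$ forces the collapse of $[z_{l,1},a_l]$, hence of $[t,i]$, hence of $[z_{r,1},a_r]$, and one must check that this does \emph{not} additionally force $[o,z_{r,1}]$ (which would wrongly put $(o,a_r)$ into the restriction); the propagation elements $y$ with $x\prec y\prec z$ and $y\prec v$ require a similar square-by-square verification. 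The appeals to ``semimodularity to see that any perspectivity chain entering a new element must leave it along a matched covering square'' and to ``planarity to keep the left and right sides separated'' are not arguments but placeholders for the missing work. As written, the proposal is a plausible plan, not a proof.
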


Let $L$ be an SPS lattice 
and let $S = \set{o, a_l, a_r, i}$ be a covering square of~$L$.
We call $S$ a \emph{tight square} 
if $i$ covers exactly two elements, namely, $a_l$ and $a_r$, in $L$;
otherwise, $S$ is a \emph{wide square}.

\begin{theorem}\label{T:wide}
Let $L$ be an SPS lattice. 
Let $S$ be a wide square. 
Then $L[S]$ is a congruence-preserving extension of $L$.
\end{theorem}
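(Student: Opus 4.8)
The plan is to show that restriction, $\beta\mapsto\beta|_{L}$, is an isomorphism from $\operatorname{Con}(L[S])$ onto $\operatorname{Con}(L)$. Restriction is order-preserving and preserves meets, and a bijective meet-homomorphism between lattices is automatically an isomorphism: whenever $f(a)\le f(b)$ we get $f(a)=f(a)\wedge f(b)=f(a\wedge b)$, so injectivity yields $a\wedge b=a$, i.e.\ $a\le b$, whence $f^{-1}$ is order-preserving as well. Since SPS lattices are finite, the theorem thus reduces to proving that restriction is a bijection. Surjectivity is immediate from Theorem~\ref{T:extension}: every $\alpha\in\operatorname{Con}(L)$ has an extension $\hat\alpha\in\operatorname{Con}(L[S])$ with $\hat\alpha|_{L}=\alpha$. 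Hence everything comes down to \emph{injectivity}: a congruence of $L[S]$ must be determined by its restriction to $L$.

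For injectivity I would use that in a finite lattice every congruence is the join of the principal congruences of the prime intervals it collapses; so a congruence $\beta$ of $L[S]$ is determined by the set of prime intervals of $L[S]$ that it collapses. These split into the \emph{old} prime intervals, which lie in $L$, and the \emph{new} ones, each involving an element created by the fork --- either an element of the copy of $\SfS 7$ replacing $S$, or an element $y$ added in the second phase. For an old prime interval $p$, clearly $\beta$ collapses $p$ iff $\beta|_{L}$ does. So it suffices to prove the claim that \emph{every new prime interval of $L[S]$ is projective, inside $L[S]$, to a prime interval lying in $L$}: if $p$ is new and $p'$ is old with $p$ projective to $p'$, then $\operatorname{con}_{L[S]}(p)=\operatorname{con}_{L[S]}(p')$, so $\beta$ collapses $p$ iff $\beta|_{L}$ collapses $p'$. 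Granting the claim, the entire set of prime intervals collapsed by $\beta$ is recovered from $\beta|_{L}$, giving injectivity.

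Proving the projectivity claim is the heart of the argument and the only place the \emph{wide} hypothesis enters. The new prime intervals lying along the boundary of the copy of $\SfS 7$ transpose, by elementary perspectivities within $\SfS 7$ and semimodularity, onto prime intervals of $L$ on the boundary of $S$ (the subintervals of $[o,a_l]$, $[a_l,i]$, $[o,a_r]$, $[a_r,i]$); these are harmless. The genuine difficulty lies with the prime intervals along the newly created diagonal of the fork, for which no transpose to the boundary exists inside $\SfS 7$ by itself. Here I would use that $S$ is wide: there is a lower cover $b\prec i$ in $L$ with $b\notin\set{a_l,a_r}$, yielding a prime interval $[b,i]$ of $L$ and a covering square of $L$ with top $i$ formed from $b$ and one of $a_l,a_r$. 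After the fork insertion this square provides a transpose of the diagonal interval, anchoring it by projectivity to $[b,i]$. For a tight square no such anchor exists, which is precisely why tight squares require separate analysis through the congruence $\gamma$.

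Finally I would propagate the claim to the second-phase elements. Each such $y$ arises on a chain $x\prec y\prec z$ with $y\prec v$, where $v$ was inserted earlier and $T=\set{x,z,u,w}$ is a covering square of $L$ with $u\prec v\prec w$. The covering squares $\set{x,y,u,v}$ and $\set{y,z,v,w}$ then make the new intervals $[x,y]$, $[y,v]$ and $[y,z]$ perspective to prime intervals already treated --- indeed $[y,v]$ transposes directly onto the old interval $[x,u]$ --- and an induction on the order in which the second-phase elements are created carries the claim to all of them. The step I expect to be the main obstacle is the explicit construction of the perspectivity chains for the diagonal intervals of $\SfS 7$, together with the bookkeeping confirming that the wideness element $b$ does furnish, after insertion, a covering square transposing onto them.
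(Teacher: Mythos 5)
Your overall architecture coincides with the paper's: the reduction of the theorem to the statement that every prime interval of $L[S]$ not in $L$ generates the same principal congruence as some prime interval of $L$ is exactly condition (P) of Lemma~\ref{L:technical2} (which the paper cites rather than re-derives), and the paper likewise finishes by assigning to each new prime interval an old one, with the wideness of $S$ entering only through the extra lower cover of $i$. So the strategy is right; the problem is that the one step you flag as the main obstacle is also the one step your sketch gets wrong, in two ways. First, the boundary/diagonal split is drawn in the wrong place: the boundary edges $[z_{l,1},a_l]$ and $[z_{r,1},a_r]$ of the new $\SfS 7$ are \emph{not} harmless, since $z_{l,1}=a_l\mm t$ and $i=a_l\jj t$ make $[z_{l,1},a_l]$ perspective to $[t,i]$; they generate exactly the problematic congruence $\bgg(S)=\con{t,i}$. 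The genuinely harmless new edges of the $\SfS 7$ are $[o,z_{l,1}]$, $[o,z_{r,1}]$, $[z_{l,1},t]$, $[z_{r,1},t]$, which transpose onto the old edges $[a_l,i]$ and $[a_r,i]$.

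Second, and more seriously, $[t,i]$ cannot be ``anchored by a transpose to $[b,i]$'': in general $\con{t,i}\neq\con{b,i}$. With $b$ to the right of $a_r$, the correct identity is $\con{t,i}=\con{a_r,i}$, and no single perspectivity yields it, because $[t,i]$ and $[a_r,i]$ share the top element $i$ and transposing $[t,i]$ downward only reaches new intervals such as $[z_{r,1},a_r]$. The paper's Lemma~\ref{L:moretwo} obtains the equality by a two-sided squeeze between two overlapping $\SfS 7$ sublattices of $L[S]$: in the one generated by $\set{a_l,t,a_r}$ the middle lower cover of $i$ is $t$, giving $\con{t,i}\leq\con{a_r,i}$, while in the one generated by $\set{t,a_r,b}$ the middle lower cover is $a_r$, giving $\con{a_r,i}\leq\con{t,i}$. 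Your covering square on $b$ and $a_r$ does supply the second inequality (if $t\equiv i$ then $b\equiv t\mm b\leq a_r\mm b$, whence $i=b\jj a_r\equiv a_r$), but the target of the identification must be $[a_r,i]$, not $[b,i]$, and the first inequality, coming from the fork's own $\SfS 7$, is also needed. Once $\con{t,i}=\con{a_r,i}$ is established, your handling of the remaining first- and second-phase intervals by perspectivity and induction is sound and agrees with the paper's explicit assignment of $[a_l,i]$ and $[a_r,i]$ to the five lists of new prime intervals.
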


Let $\bga$ be a congruence of $L$. 
Then there is a smallest congruence $\oa$ of $L[S]$ generated by $\bga$.
By Theorem~\ref{T:extension}, $\bga$ is the restriction of $\oa$ to $L$.
Moreover, if $\bga$ is a join-irreducible congruence of $L$, 
then $\oa$ is a join-irreducible congruence of $L[S]$.

\begin{theorem}\label{T:tight}
Let $L$ be an SPS lattice. 
Let $S$ be a \emph{tight square}. 
Then $L[S]$ has exactly one join-irreducible congruence, $\bgg(S)$,
that is not the minimal extension of a join-irreducible congruence of $L$.
\end{theorem}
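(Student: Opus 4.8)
The plan is to reduce the statement to a count of prime intervals. Recall that in any finite lattice the join-irreducible congruences are exactly the principal congruences $\Princ(x,y)$ generated by the prime intervals (edges) $[x,y]$, that two edges generate the same congruence iff they are projective, and that in an SPS lattice the projectivity classes of edges are precisely its trajectories; so it suffices to list the trajectories of $L[S]$ and decide which of them meet $L$. I first record the effect of the fork on $S$. Denoting the three new elements by $p_l$, $p_r$, $m$, with $o\prec p_l\prec a_l$, $o\prec p_r\prec a_r$ and $p_l,p_r\prec m\prec i$, the copy of $\SfS 7$ replacing $S$ has the nine edges $[o,p_l]$, $[o,p_r]$, $[p_l,a_l]$, $[p_l,m]$, $[p_r,m]$, $[p_r,a_r]$, $[a_l,i]$, $[m,i]$, $[a_r,i]$ and the three covering squares $\set{o,p_l,p_r,m}$, $\set{p_l,a_l,m,i}$, $\set{p_r,m,a_r,i}$.

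Reading off the perspectivities in these squares, the edges of $\SfS 7$ split into exactly three trajectories,
\[
 T_1=\set{[o,p_l],[p_r,m],[a_r,i]},\quad T_2=\set{[o,p_r],[p_l,m],[a_l,i]},\quad T_3=\set{[p_l,a_l],[m,i],[p_r,a_r]}.
\]
The next step is to dispose of $T_1$ and $T_2$. Because $T_1$ contains the surviving edge $[a_r,i]$ and $T_2$ the surviving edge $[a_l,i]$, the two trajectories of $L$ that run through $S$—carrying $\bga_l=\Princ(o,a_l)=\Princ(a_r,i)$ and $\bga_r=\Princ(o,a_r)=\Princ(a_l,i)$—continue across the fork precisely along $T_1$ and $T_2$. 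I would confirm that the congruences they generate in $L[S]$ restrict to $\bga_l$ and $\bga_r$ on $L$ and therefore coincide with the minimal extensions $\ol{\bga_l}$, $\ol{\bga_r}$ furnished by the remark preceding this theorem; thus $T_1$ and $T_2$ contribute nothing but extensions.

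The heart of the matter is $T_3$, which I claim is $\bgg(S)=\Princ(m,i)$. Inside $\SfS 7$ the blocks of $\Princ(m,i)$ are $\set{o}$, $\set{p_l,a_l}$, $\set{p_r,a_r}$, $\set{m,i}$, so the four corners $o,a_l,a_r,i$ of $S$ lie in four different blocks and $\bgg(S)$ collapses no edge of $S$. Tightness now keeps $T_3$ from escaping: since $i$ covers only $a_l$ and $a_r$ in $L$, it covers only $a_l,m,a_r$ in $L[S]$, so the sole covering squares having $[m,i]$ as an edge are $\set{p_l,a_l,m,i}$ and $\set{p_r,m,a_r,i}$, while $[p_l,a_l]$ and $[p_r,a_r]$ lie in no further covering square. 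Hence $T_3$ neither rises above $i$ nor descends below $p_l,p_r$ into $L$, so $\bgg(S)$ restricts to the trivial congruence on $L$. As every minimal extension $\ol\bga$ restricts to the nontrivial congruence $\bga$ by Theorem~\ref{T:extension}, $\bgg(S)$ is a join-irreducible congruence that is not a minimal extension.

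Uniqueness is then a matter of bookkeeping: any edge of $L[S]$ lying outside $\SfS 7$ has a trajectory that meets $L$ and hence yields a minimal extension, while any edge inside $\SfS 7$ belongs to $T_1$, $T_2$, or $T_3$, of which only $T_3$ restricts trivially to $L$. So $\bgg(S)$ is the unique join-irreducible congruence of $L[S]$ that is not a minimal extension of a join-irreducible congruence of $L$. The main obstacle, as always with the fork, is the second step of the construction: the elements $y$ inserted along chains $u\prec v\prec w$ introduce extra edges, and I must show that each is projective to an edge already present—so that it opens no new trajectory—and, crucially, that none of them connects the confined trajectory $T_3$ to the rest of $L[S]$. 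Tightness is precisely what blocks such a connection at the top edge $[m,i]$; checking the corresponding confinement at each inserted $y$ is the delicate part of the proof.
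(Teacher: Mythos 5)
Your reduction to trajectories is sound on the initial copy of $\SfS 7$, and your $T_1,T_2,T_3$ match the paper's analysis there. But the step you defer to the end---confinement of $T_3$ along the chain of inserted elements $z_{l,1}\succ\dots\succ z_{l,n}$---is not merely the delicate part: the confinement claim is false in general, and with it your key assertion that $\bgg(S)$ restricts to the trivial congruence on $L$. The trajectory of $[t,i]$ continues down-left through the new edges $[z_{l,1},x_{l,1}],\dots,[z_{l,n},x_{l,n}]$, and whenever some $x_{l,i}$ is a \emph{protrusion}, that is, covers three or more elements of $L$, Lemma~\ref{L:known}(ii) yields an $\SfS 7$ with top $x_{l,i}$ whose lower covers include $z_{l,i}$ together with two elements of $L$; computing in that $\SfS 7$, the congruence $\con{z_{l,i},x_{l,i}}\leq\bgg(S)$ collapses prime intervals lying entirely in $L$, for instance $[x_{l,i+1},x_{l,i}]$ and $[y_{l,i+1},y_{l,i}]$. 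This is precisely why the paper introduces the protrusion congruence $\bgp$ and defines $\bgg_S$ with the enlarged classes \eqref{E:iii}--\eqref{E:v}, which visibly contain nontrivial classes such as $\set{y_{l,i},y_{l,i+1}}$ inside $L$ (Figure~\ref{F:gamma}).

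Consequently your criterion for ``not a minimal extension''---trivial restriction to $L$---is the wrong one, and it cannot be repaired by a local check at each inserted element. What is actually needed, and what the paper proves, is that no prime interval $\fq$ of $L$ satisfies $\con{\fq}=\bgg(S)$ in $L[S]$: this is Lemma~\ref{L:G} ($\bgg(S)=\con{\fp}$ exactly for $\fp$ in the list $G$ of new ``vertical'' edges), and it requires controlling the extra collapses via Lemma~\ref{L:protrusion}, which shows $\ol\bgp_{l,i}=\con{y_{l,i},y_{l,i+1}}\leq\ol\bga_r(S)$, so the protrusion classes do not push $\bgg(S)$ up to the minimal extension of any join-irreducible congruence of $L$. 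Your uniqueness argument has the same gap: the edges $[z_{l,j},x_{l,j}]$ with $j\geq 2$ lie outside the initial $\SfS 7$ yet belong to $T_3$, so ``every edge outside $\SfS 7$ has a trajectory meeting $L$'' fails as stated, and the bookkeeping must instead follow the paper's case analysis in Section~7 showing that any $\con{\fp}$ strictly above $\bgg(S)$ already contains $\bga_l(S)$ or $\bga_r(S)$.
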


In this note we will examine the congruence $\bgg(S)$.

We will use the notations and concepts of lattice theory, 
as in \cite{LTF}.

\section{Congruences of finite lattices}\label{S:Congruences}

As~illustrated in Figure~\ref{F:cong2}, 
we say that $[a,b]$ is \emph{up congruence-perspective} 
to $[c,d]$ and write $[a,b] \cperspup [c,d]$\label{GoN:cperspup} 
if $a \leq c$ and $d = b \jj c$; similarly,
$[a,b]$ is \emph{down congruence-perspective} to $[c,d]$ 
and write $[a,b] \cperspdn [c,d]$ if $d \leq b$ and $c = a \mm d$.
If $[a,b] \cperspup [c,d]$ \emph{or} $[a,b] \cperspdn [c,d]$, 
then $[a,b]$ is \emph{congruence-perspective} to $[c,d]$ 
and we write $[a,b] \cpersp [c,d]$.

\begin{figure}[tbh]
  \centerline{\includegraphics{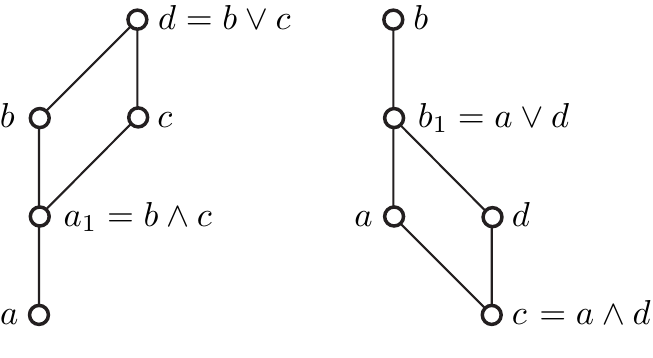}}
  \caption{$[a,b] \cperspup [c, d]$ and $[a,b] \cperspdn [c, d]$}\label{F:cong2}
\end{figure}

If for some natural number~$n$ 
and intervals $[e_i, f_i]$, for $0 \leq i \leq n$,
\[
   \inv{a}{b} = \inv{e_0}{f_0} \cpersp \inv{e_1}{f_1} \cpersp 
      \cdots \cpersp \inv{e_n}{f_n} = \inv{c}{d},
\]
then we call $[a,b]$ \emph{congruence-projective}
to $[c,d]$, and we write $[a, b] \cproj [c, d]$. 

We now state a classic result in a special case.

\begin{lemma}\label{L:cproj}
Let L be a lattice, $a \leq b$ in $L$, and let $\fq$ be a prime interval.
Then $\fq$ is collapsed by $\con{a, b}$
if{f} $[a, b] \cproj \fq$.
In fact, there is a prime interval $\fp$ in $[a,b]$ 
such that $\fp \cproj \fq$.
\end{lemma}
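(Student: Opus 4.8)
The plan is to prove the backward implication by a direct transfer argument and the forward implication by reducing, via finiteness, to the prime-to-prime case, picking up the ``in fact'' clause along the way. The backward implication, that $[a,b] \cproj \fq$ forces $\fq$ to be collapsed by $\con{a,b}$, is routine. The point is that a single congruence-perspectivity transfers collapsing: if $[e,f] \cpersp [g,h]$ and a congruence $\theta$ satisfies $e \equiv f \pmod{\theta}$, then $g \equiv h \pmod{\theta}$. Indeed, in the up case $[e,f] \cperspup [g,h]$ we have $e \leq g$ and $h = f \jj g$, so joining $e \equiv f$ with $g$ gives $g = e \jj g \equiv f \jj g = h$; the down case is dual. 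Iterating this along the defining chain of $[a,b] \cproj \fq$ and taking $\theta = \con{a,b}$, which collapses $[a,b]$ by its very definition, shows that $\fq$ is collapsed.

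For the forward implication I would exploit that the lattice is finite. Fix a maximal chain $a = c_0 \prec c_1 \prec \cdots \prec c_k = b$ in $[a,b]$; then $\con{a,b} = \bigvee_i \con{c_{i-1},c_i}$, since a congruence collapses $[a,b]$ exactly when it collapses every rung. Now suppose $\fq = [c,d]$, with $c \prec d$, is collapsed by this join. Expressing $c \equiv d$ as a finite sequence of steps, each lying within a single $\con{c_{i-1},c_i}$, and then meeting the whole sequence with $d$ and joining it with $c$, I may assume every intermediate element lies in $[c,d] = \set{c,d}$; primeness of $\fq$ then forces one adjacent step to be precisely $c \equiv d$ within a single rung. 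Hence $\fq$ is already collapsed by $\con{c_{i-1},c_i}$ for some $i$.

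This reduces everything to the prime-to-prime statement: for prime intervals $\fp$ and $\fq$, the interval $\fq$ is collapsed by $\con{\fp}$ if{f} $\fp \cproj \fq$. I expect \emph{this} to be the main obstacle, as it carries the genuine content of the classic result; its proof describes $\con{\fp}$ explicitly as the relation collapsing exactly those prime intervals reachable from $\fp$ by a chain of congruence-perspectivities, and verifies that this prescription is in fact a congruence. Granting it, the rung $\fp = [c_{i-1},c_i]$ satisfies $\fp \cproj \fq$, and since $[a,b] \cperspup [c_{i-1},b] \cperspdn [c_{i-1},c_i] = \fp$ we have $[a,b] \cproj \fp$, whence $[a,b] \cproj \fq$ by concatenation. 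As $\fp$ is a prime interval contained in $[a,b]$, this simultaneously establishes the ``in fact'' clause.
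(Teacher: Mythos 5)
The paper offers no proof of this lemma at all; it is introduced with ``We now state a classic result in a special case'' and used as a black box (it is essentially the classical description of $\con{a,b}$ from \cite{LTF}, specialized to a prime interval $\fq$). Measured against that, you supply more detail than the paper does, and the parts you actually prove are correct: the backward implication via the transfer of collapsing along a single congruence-perspectivity; the decomposition $\con{a,b}=\bigvee_i\con{c_{i-1},c_i}$ along a maximal chain of $[a,b]$; the reduction of the forward implication to a single rung by replacing each term $z_j$ of a witnessing sequence with $(z_j\jj c)\mm d$ and invoking primeness of $\fq$; and the derivation of the ``in fact'' clause from the rung $\fp=[c_{i-1},c_i]$ together with $[a,b]\cperspup[c_{i-1},b]\cperspdn\fp$.

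The genuine gap is that the statement you reduce everything to --- for prime intervals $\fp$ and $\fq$, if $\fq$ is collapsed by $\con{\fp}$ then $\fp\cproj\fq$ --- is precisely the substance of the lemma, and you do not prove it; you name a strategy (describe $\con{\fp}$ as the relation collapsing exactly those prime intervals reachable from $\fp$ by chains of congruence-perspectivities, and check that this is a congruence) and then explicitly ``grant it.'' Carrying that strategy out is where all the work lives: one must say what the candidate relation does on arbitrary pairs, not just prime quotients (for instance, $x\equiv y$ iff every prime interval in some maximal chain of $[x\mm y,x\jj y]$ is reachable from $\fp$), show independence of the chosen chain, and verify the substitution property --- or else prove the unrestricted classical description of $\con{a,b}$ directly and specialize. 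As written, the proposal is a correct and cleanly organized reduction of the lemma to its own hardest case, but not a proof of it; since the paper itself declines to prove the lemma, the honest conclusion is that both texts ultimately lean on the same external classical result, yours just more explicitly.
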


There are two 
technical lemmas we need.

\begin{lemma}\label{L:technical}
Let $L$ be a finite lattice. 
Let $\bgd$ be an equivalence relation on $L$
with intervals as equivalence classes.
Then $\bgd$ is a congruence relation if{}f the following condition 
and its dual hold:
\begin{equation}\label{E:cover}
\text{If $x$ is covered by $y \neq z$ in $L$ 
and $x \equiv y\,(\tup{mod}\, \bgd)$,
then $z \equiv y \jj z\,(\tup{mod}\,\bgd)$.}\tag{C${}_{\jj}$}
\end{equation}
\end{lemma}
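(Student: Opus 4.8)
The plan is to prove the two implications separately; the forward direction is immediate and the converse carries all the content. First I would assume $\bgd$ is a congruence and verify condition \eqref{E:cover}: if $x$ is covered by the distinct elements $y$ and $z$ and $x \equiv y \pmod{\bgd}$, then the join-substitution property gives $x \jj z \equiv y \jj z \pmod{\bgd}$; since $x \prec z$ forces $x \jj z = z$, this says exactly $z \equiv y \jj z \pmod{\bgd}$. The dual condition follows from meet-substitution in the same way.

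For the converse, assume \eqref{E:cover} and its dual; the goal is the substitution property, and my plan is to reduce it to covering pairs. Since every $\bgd$-class is an interval, hence convex, $a \equiv b \pmod{\bgd}$ forces $a \mm b \equiv a \jj b \pmod{\bgd}$ and places the whole interval $[a \mm b, a \jj b]$ in a single class. Fixing a maximal chain $a \mm b = t_0 \prec t_1 \prec \cdots \prec t_k = a \jj b$ inside that class, with each $t_i \equiv t_{i+1} \pmod{\bgd}$, it suffices to prove substitution for a single cover: whenever $x \prec y$ with $x \equiv y \pmod{\bgd}$, then $x \jj z \equiv y \jj z \pmod{\bgd}$ for all $z$ (and dually). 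Chaining this along the $t_i$ gives $(a \mm b)\jj c \equiv (a \jj b)\jj c \pmod{\bgd}$, and convexity then collapses the interval $[(a\mm b)\jj c,(a\jj b)\jj c]$ containing both $a \jj c$ and $b \jj c$, recovering the general statement.

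I would establish join-substitution for covers by induction on $\length([x, y \jj z])$. If $z \le x$, then $x \jj z = x \equiv y = y \jj z$. Otherwise $x < x \jj z$, so I pick $x_1$ with $x \prec x_1 \le x \jj z$. If $x_1 = y$, then $y \le x \jj z$ and $x \jj z = y \jj z$. If $x_1 \ne y$, condition \eqref{E:cover} applied to the distinct covers $y, x_1$ of $x$ gives $x_1 \equiv y \jj x_1 \pmod{\bgd}$. Because $x \le x_1 \le x \jj z$, we have $x_1 \jj z = x \jj z$ and $(y \jj x_1) \jj z = y \jj z$, so it remains to transport $x_1 \equiv y \jj x_1$ across the join with $z$. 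Connecting $x_1$ to $y \jj x_1$ by a covering chain inside their common $\bgd$-class and applying the induction hypothesis to each step — each subproblem lives in $[x_1, y \jj z] \subsetneq [x, y \jj z]$, so the measure drops — yields $x \jj z = x_1 \jj z \equiv (y \jj x_1) \jj z = y \jj z$. In the base case $\length([x, y \jj z]) = 1$ one has $y \jj z = y$, whence $x \le x \jj z \le y$ and $x \jj z \equiv y = y \jj z$. The meet-substitution is entirely dual, using the dual of \eqref{E:cover}.

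The main obstacle is choosing the induction measure correctly. The naive choice $\length([x, x \jj z])$ fails: after invoking \eqref{E:cover}, the tops of the subproblems can climb all the way up to $y \jj z$, so that quantity need not decrease. Measuring instead by $[x, y \jj z]$, up to the \emph{fixed} common top $y \jj z$, repairs this, since replacing the bottom $x$ by its cover $x_1$ strictly shortens the interval while the top is unchanged. Verifying that every subproblem genuinely sits inside $[x_1, y \jj z]$, and cleanly disposing of the degenerate cases $z \le x$, $x_1 = y$, and $y \le x \jj z$, is the remaining, routine bookkeeping.
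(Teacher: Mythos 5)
Your proof is correct, and it follows the same overall strategy that the paper compresses into two sentences: use the convexity of the $\bgd$-classes to reduce the join-substitution property to a single covering pair $x \prec y$, and then settle that case by induction on the length of an interval, with (C${}_{\jj}$) supplying the step from $x$ to a cover $x_1 \leq x \jj z$. The one substantive difference is the induction measure. The paper inducts on $\length[x, x \jj z]$; you induct on $\length[x, y \jj z]$, and your reason for the change is legitimate: after (C${}_{\jj}$) yields $x_1 \equiv y \jj x_1 \pod{\bgd}$, the covering steps $c \prec c'$ of a maximal chain from $x_1$ to $y \jj x_1$ produce subproblems whose tops $c' \jj z$ may lie strictly above $x \jj z$, so a decrease of $\length[c, c \jj z]$ below $\length[x, x \jj z]$ is not evident in a general finite lattice. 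With your measure the decrease is immediate, since $[c, c' \jj z] \subseteq [x_1, y \jj z]$ and $\length[x_1, y \jj z] < \length[x, y \jj z]$ because the top $y \jj z$ is held fixed while the bottom strictly rises. In short, you have written out in full the argument the paper only sketches, and in doing so chose a measure for which termination is actually visible; the remaining details (the degenerate cases $z \leq x$, $x_1 = y$, $y \leq x \jj z$, and the dual argument for meets) are handled correctly.
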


\begin{proof}
We want to prove that if $x \leq y$ and $\cng x=y (\bgd)$,
then $\cng x \jj z = y \jj z(\bgd)$.
The proof is a trivial induction first on $\length[x, y]$
and then on $\length[x, x \jj z]$.
\end{proof}

Let (C${}_{\mm}$) denote the dual of (C${}_{\jj}$).

The following statement is a variant of Lemmas 245 and 246 of \cite{LTF}.

\begin{lemma}\label{L:technical2}
Let $L$ be a finite lattice. Let $K$ be an extension of $L$.
Let us assume that every congruence of $L$ extends to $K$.
Then $K$ is a congruence-preserving extension if{}f the following condition is satisfied:
\begin{enumerate}\label{E:prime}
\item[\tup{(P)}]for every prime interval $\fp$ of $K$,
$\fp$ not in $L$,
there exists a prime interval~$\fq$ of $L$ 
such that
$\con{\fp} = \con{\fq}$ in $K$.
\end{enumerate}
\end{lemma}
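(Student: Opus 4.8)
The plan is to prove Lemma~\ref{L:technical2} by establishing the two directions of the biconditional, where the forward direction is essentially trivial and the content lies in the converse.

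First I would dispose of necessity. Assume $K$ is a congruence-preserving extension of $L$. Let $\fp$ be any prime interval of $K$ not in $L$, and consider the join-irreducible congruence $\con{\fp}$ of $K$. Since $K$ is congruence-preserving, the restriction map $\bgd \mapsto \bgd\restriction L$ is an isomorphism $\Con K \to \Con L$, so every congruence of $K$ is generated by prime intervals of $L$; in particular $\con{\fp}$ equals the extension of $\con{\fp}\restriction L$, and I would pick a prime interval $\fq$ of $L$ collapsed by $\con{\fp}$ and minimal among these in the join-irreducible ordering, so that $\con{\fq} = \con{\fp}$. This gives condition~(P).

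The substance is sufficiency. Assume (P) holds; I want to show the restriction map is an isomorphism. By hypothesis every congruence of $L$ extends to $K$, so restriction followed by extension is a join-preserving, onto, order-preserving map; the work is to show it is injective, equivalently that every join-irreducible congruence of $K$ restricts to a nonzero (hence join-irreducible) congruence of $L$ and that distinct join-irreducible congruences of $K$ restrict to distinct ones. The plan is to reduce everything to prime intervals via Lemma~\ref{L:cproj}: a join-irreducible congruence of $K$ has the form $\con{\fp}$ for some prime interval $\fp$ of $K$. If $\fp$ already lies in $L$ there is nothing to do; if not, (P) supplies a prime interval $\fq$ of $L$ with $\con{\fp} = \con{\fq}$ in $K$. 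Thus every join-irreducible congruence of $K$ is already generated by a prime interval of $L$, so the extension map from $\Con L$ is onto at the level of join-irreducibles, and combined with Theorem~\ref{T:extension} (every congruence of $L$ extends) this forces $\Con K \cong \Con L$.

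The main obstacle I expect is handling the bookkeeping of congruence-projectivity across the extension: I must verify that $\con{\fp} = \con{\fq}$ computed in $K$ restricts correctly to $L$, i.e.\ that no prime interval of $L$ is collapsed in $K$ without being collapsed by the corresponding congruence already in $L$. Concretely, for injectivity I would argue that if two congruences $\bga, \bgb$ of $L$ have $\oa = \ol{\bgb}$ in $K$, then since both are determined by which prime intervals of $L$ they collapse, and collapsing in $K$ of a prime interval $\fq$ of $L$ is detected by $\con{\fq}\restriction L$ via Lemma~\ref{L:cproj}, we recover $\bga = \bgb$. The delicate point is that a prime interval $\fp$ of $K$ outside $L$ might a priori create new identifications among prime intervals of $L$; condition~(P) is exactly what rules this out, since it guarantees any such $\fp$ is congruence-equivalent to a prime interval already internal to $L$, so no genuinely new collapsing is introduced.
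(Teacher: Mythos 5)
The paper does not actually prove this lemma; it is dispatched with a citation (``a variant of Lemmas 245 and 246 of \cite{LTF}''), so there is no internal argument for you to diverge from. What you give is, in outline, the standard proof of the cited result, and it is essentially sound: the heart of sufficiency is that every congruence of $K$ is the join of the congruences $\con{\fp}$ over the prime intervals $\fp$ of $K$ it collapses, and (P) lets you trade each $\fp$ outside $L$ for a prime interval $\fq$ of $L$ with $\con{\fq}=\con{\fp}$, so every congruence of $K$ is the join of the $\consub{K}{\fq}$ over the prime intervals $\fq$ of $L$ it collapses; hence it is determined by its restriction to $L$, and with the extendability hypothesis (which makes extension followed by restriction the identity on $\Con L$) the restriction map is an isomorphism. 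Two small repairs are needed. In the necessity direction, taking $\fq$ \emph{minimal} among the prime intervals of $L$ collapsed by $\con{\fp}$ does not in general give $\con{\fq}=\con{\fp}$ (the minimal one may generate a strictly smaller congruence); instead, write $\con{\fp}$ as the join of the $\consub{K}{\fq}$ over those prime intervals $\fq$ of $L$ that it collapses (legitimate because restriction is assumed to be an isomorphism in this direction) and invoke the join-irreducibility of $\con{\fp}$ to conclude that it equals one of the joinands. Second, your parenthetical ``nonzero (hence join-irreducible)'' is unjustified---a nonzero restriction of a join-irreducible congruence need not be join-irreducible---but it is also unused, since your actual argument runs through surjectivity of the extension map on the join-irreducible congruences of $K$, which is the right route.
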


\section{The fork construction}\label{S:forks}

Let $L$ be an SPS lattice. 
Let $S =\set{o, a_l, a_r, i}$ be a covering square of $L$,
let $a_l$ be to the left of $a_r$
We need some notation for the $L[S]$ construction, 
see Figure~\ref{F:forksdetails}. 

\begin{figure}[h]
\centerline{\includegraphics{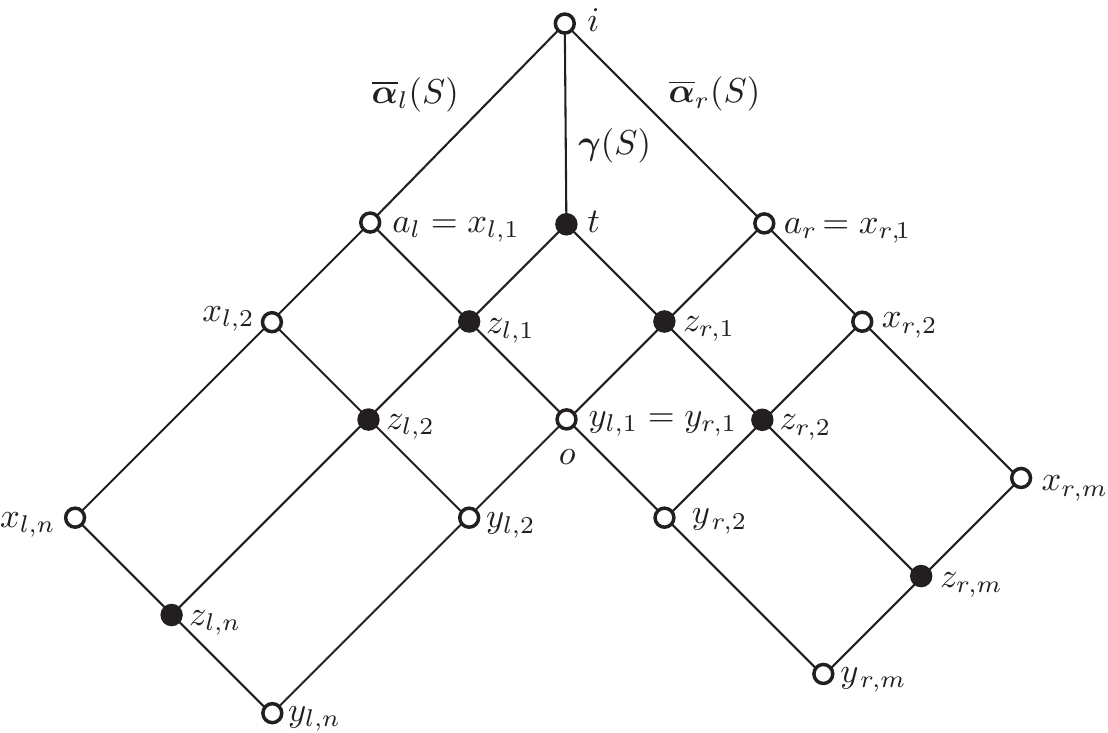}}
\caption{Notation for the fork construction}\label{F:forksdetails}
\end{figure}

We start the construction 
by adding the elements $t$, $z_{l,1}$, and $z_{r,1}$
so that the set $\set{o, z_{l,1}, z_{r,1}, a_l, a_r, t,i}$ 
forms a sublattice $\SfS 7$. We say that this $\SfS 7$ sublattice and the covering square $S$ are \emph{associated} with each other.  

Let $a_l = x_{l,1}$, $o = y_{l,1}$. 
If $k$ is the largest number so that 
$x_{l,k}$, $y_{l,k}$, and $z_{l,k}$ have already been defined, and
\[
   T = \set{y_{l,k+1} = x_{l,k+1} \mm y_{l,k}, x_{l,k+1}, y_{l,k},
    x_{l,k} = x_{l,k+1} \jj y_{l,k}}
\]
is a covering square in $L$, then we add the element $z_{l,k+1}$,
so we get two new covering squares 
$\set{y_{l,k+1}, x_{l,k+1}, y_{l,k}, x_{l,k}}$
and $\set{z_{l,k+1}, y_{l,k+1}, z_{l,k}, y_{l,k}}$.
We proceed similarly on the right.

So $L[S]$ is constructed by inserting the elements in the set
\[
   F[S] = \set{t,z_{l,1} \succ \dots \succ z_{l,n}, 
          z_{r,1} \succ \dots \succ z_{r,m}}
\]
so that $\set{o, z_{l,1}, z_{r,1}, a_l, a_r, t,i}$ 
is a sublattice $\SfS 7$,
moreover, $x_{l,i} \succ z_{l,i} \succ y_{l,i}$ for $i = 1, \dots, n$, and $x_{r,i} \succ z_{r,i} \succ y_{r,i}$ for $i = 1, \dots, m$.
The new elements are black filled in Figure~\ref{F:forksdetails}.

We name a few join-irreducible congruences of $L$ and $L[S]$ that will plays an important role: $\bga_l(S) = \consub{L}{a_l,i}$ and  
$\bga_r(S) = \consub{L}{a_r,i}$ in $L$ and 
 $\ol\bga_l(S) = \consub{L[S]}{a_l,i}$,   
$\ol\bga_r(S) = \consub{L[S]}{a_r,i}$ and $\bgg(S) = \consub{[L[S]}{t,i}$. 
The following results are well known.

\begin{lemma}\label{L:known}
Let $L$ be an SPS lattice. 
\begin{enumeratei}
\item An element of $L$ has at most two covers.

\item Let $a \in L$. 
Let $a$ cover the three elements $x_1$, $x_2$, and $x_3$.
Then the set $\set{x_1,x_2,x_3}$ generates an $\SfS 7$ sublattice.

\item If the elements $x_1$, $x_2$, and $x_3$ are adjacent, 
then the $\SfS 7$ sublattice of \tup{(ii)} is a cover-preserving sublattice.
\end{enumeratei}
\end{lemma}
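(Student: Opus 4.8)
The plan is to handle the three parts in turn, using two standing features of the class: every interval (in particular every filter and ideal) of an SPS lattice is again slim and semimodular, and a slim lattice has at most two atoms, since three atoms would be a three-element antichain of join-irreducible elements. Part (i) then drops out immediately. Let $a \in L$. Its covers are exactly the atoms of the filter $[a,1]$, and $[a,1]$ is again slim; hence it has at most two atoms, so $a$ has at most two covers. Semimodularity enters only through the fact that it keeps the interval slim; no finer argument is needed.

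For (ii), suppose $a$ covers $x_1$, $x_2$, $x_3$, and use planarity to list them in left-to-right order with $x_2$ the middle one. First, each pairwise join is $a$: from $x_i \prec a$ and $x_i < x_i \jj x_j \leq a$ we get $x_i \jj x_j = a$. Put $p = x_1 \mm x_2$, $q = x_2 \mm x_3$, and $u = x_1 \mm x_3$. Planarity, in the form that the middle cover lies above the meet of the outer two, gives $u \leq x_2$, hence $u = x_1 \mm x_2 \mm x_3 = p \mm q$. I would then show that $p$ and $q$ are distinct lower covers of $x_2$ with $p \jj q = x_2$, so that $\set{u,p,q,x_1,x_2,x_3,a}$ is closed under meet and join and realises the seven-element pattern of $\SfS 7$, under the matching $u \mapsto o$, $p,q \mapsto z_{l,1},z_{r,1}$, $x_2 \mapsto t$, $x_1,x_3 \mapsto a_l,a_r$, $a \mapsto i$. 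That the seven listed elements are genuinely distinct and that nothing further is generated follows from slimness applied to the interval $[u,a]$.

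For (iii), assume the three covers are adjacent; it remains to upgrade the comparabilities in the $\SfS 7$ of (ii) to covers in $L$. The relations $x_1, x_2, x_3 \prec a$ are given. For the lower half, adjacency is exactly the hypothesis that rules out an element strictly between $p$ and $x_1$, between $u$ and $p$, and so on: planarity confines any such intermediate element to the plane region bounded by the edges of the $\SfS 7$, and adjacency empties that region. Hence every covering edge of the abstract $\SfS 7$ is a covering edge of $L$, so the sublattice is cover-preserving.

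The main obstacle is the meet side of (ii). Semimodularity gives complete control of the joins — all pairwise joins collapse to $a$ — but says nothing directly about the downward structure, so the crux is to prove that $p = x_1 \mm x_2$ and $q = x_2 \mm x_3$ are actually lower covers of $x_2$, rather than lying several steps below it. This is precisely where planarity and slimness, rather than semimodularity, must do the work; everything else is bookkeeping.
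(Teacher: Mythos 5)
The paper itself offers no proof of this lemma: it is introduced with ``The following results are well known'' and the content is imported from the Cz\'edli--Schmidt and Gr\"atzer--Knapp papers, so your attempt can only be judged on its own merits. Part (i) is essentially sound, though note that the standing fact you lean on (filters of slim lattices are slim) is proved by exactly the argument that settles (i) directly: each cover $b$ of $a$ has the form $a \jj j$ with $j \in \Ji(L)$, and three covers would force a three-element antichain among the corresponding $j$'s. So you are assuming something of the same depth as the claim, even if the assumption is legitimate as a quoted lemma.

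The genuine gap is in (ii), and you have located it yourself but not closed it. What you actually establish is the easy half: all pairwise joins equal $a$, and $u = x_1 \mm x_3 \leq x_2$ by planarity. The entire content of the lemma is the sentence you announce with ``I would then show'': that $p = x_1 \mm x_2$ and $q = x_2 \mm x_3$ are lower covers of $x_2$ (and of $x_1$, $x_3$, respectively), that $p \jj q = x_2$, and that $u \prec p, q$. Nothing you have set up delivers this --- semimodularity controls joins only, and ``slimness applied to $[u,a]$'' does not by itself exclude that $p \jj q < x_2$ or that $p$ lies several steps below $x_2$. The tool that is actually needed, and that your sketch never invokes, is the cell structure of planar semimodular lattices: two adjacent lower covers of $a$ bound a cell with top $a$, and in a slim planar semimodular lattice every cell is a $4$-cell, i.e., a covering square; this is what forces $x_j \mm x_{j+1} \prec x_j, x_{j+1}$ and, iterated, assembles the $\SfS 7$ (and once $p, q \prec x_2$ with $p \neq q$, the identity $p \jj q = x_2$ is immediate). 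The same machinery is what makes (iii) precise; ``adjacency empties that region'' is a restatement of the desired conclusion, not an argument. As written, the proposal is a correct plan whose crux is deferred, so it does not yet constitute a proof.
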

Finally, we state the Structure Theorem for SPS Lattices of
G. Cz\'edli and E.\,T. Schmidt~\cite{CSa}:

\begin{theorem}\label{T:Structure Theorem}
Let $L$ be an SPS lattice. 
There exists a planar distributive lattice~$D$
such that $L$ can be obtained from $D$ by a series 
of fork insertions.
\end{theorem}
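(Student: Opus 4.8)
The plan is to reverse the fork-insertion construction: I would induct on $|L|$ and peel off one fork at a time until only a distributive lattice remains. If $L$ is distributive, there is nothing to do --- take $D = L$ and the empty series of insertions. So assume that $L$ is not distributive. Since $L$ is slim and semimodular, non-distributivity is equivalent to the existence of an element $i$ covering three elements, and by Lemma~\ref{L:known}(ii),(iii) these generate a cover-preserving $\SfS 7$ sublattice. Fix one such $\SfS 7$ and write its elements as $\set{o, z_l, z_r, a_l, a_r, t, i}$, where $t \prec i$ is the fork-coatom.

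Next I would identify the set of elements that a single fork insertion would have added. Reading the $L[S]$ construction of Section~\ref{S:forks} in reverse, I start at $t$ and trace the two descending trajectories $z_{l,1} \succ \dots \succ z_{l,n}$ and $z_{r,1} \succ \dots \succ z_{r,m}$: at each stage the covering square immediately below the current position determines the next $z$, exactly as in the construction. Let $F$ be the resulting set $\set{t, z_{l,1}, \dots, z_{l,n}, z_{r,1}, \dots, z_{r,m}}$ and set $L' = L \setminus F$. The goal is to show that $L'$, ordered as a subposet of $L$, is again an SPS lattice and that the square $S = \set{o, a_l, a_r, i}$ left behind is a covering square of $L'$ with $L = L'[S]$. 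Granting this, $|L'| < |L|$, so the induction hypothesis supplies a planar distributive $D$ and a series of fork insertions yielding $L'$; appending the single insertion at $S$ yields $L$, which closes the induction.

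The hard part will be choosing and extracting the fork cleanly. Forks may be nested or their trajectories interleaved, so an arbitrary cover-preserving $\SfS 7$ need not be removable. I would therefore select the $\SfS 7$ so that its top $i$ is maximal among tops of cover-preserving $\SfS 7$ sublattices --- morally, the \emph{last} fork inserted --- and then argue that for this choice the traced set $F$ is exactly a set $F[S]$: that the two trajectories terminate on the boundary as prescribed, that no element outside $F$ lies below $t$ and above the restored square, and that nothing in $L'$ depends on the presence of $F$. Verifying that deleting $F$ preserves the lattice operations, semimodularity, slimness, and planarity, and that reinserting the fork at $S$ reconstructs $L$ verbatim, is the combinatorial heart of the proof; the trajectory bookkeeping of Section~\ref{S:forks}, together with Lemma~\ref{L:known}, is the tool I would use to carry it out.
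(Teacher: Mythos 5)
Two remarks before the substance: the paper offers no proof of this statement at all --- it is quoted as the Structure Theorem of Cz\'edli and Schmidt and attributed to \cite{CSa} --- so there is no internal argument to compare you against. Your overall plan (induct on $|L|$, use the fact that a non-distributive slim semimodular lattice contains an element with three lower covers and hence, by Lemma~\ref{L:known}, a cover-preserving $\SfS 7$, peel off the fork it heads, and apply the induction hypothesis) is indeed the strategy behind the cited result, and you correctly identify the verification of $(L\setminus F)[S]=L$ as the crux.

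The gap is your choice of which $\SfS 7$ to peel. You take one whose top is \emph{maximal}; the removable one is a \emph{minimal} one in the sense the paper defines just before Lemma~\ref{L:minimal}, equivalently one whose supporting square $S$ has $\id{i}$ distributive --- and this is precisely the choice the paper itself makes when it runs this reverse induction inside the proof of Lemma~\ref{L:pi}. With the maximal choice your plan fails. Concretely: start with the grid $\set{0,1,2}^2$, insert a fork at its top square $S_1$, obtaining the coatom $t_1$ and trajectories $z^1_{l,1}\succ z^1_{l,2}$ and $z^1_{r,1}\succ z^1_{r,2}$; then insert a second fork at the covering square of $G[S_1]$ whose top is $z^1_{l,1}$, with coatom $t_2$. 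In the resulting lattice $M$ the maximal cover-preserving $\SfS 7$ is still the one at the top of $M$, but its trace is already ambiguous ($z^1_{l,1}$ now has three lower covers, $z^1_{l,2}$, $t_2$ and the old $o$ of $S_1$), and if you resolve the trace the intended way and delete $F=\set{t_1,z^1_{l,1},z^1_{l,2},z^1_{r,1},z^1_{r,2}}$, then re-inserting a fork at $S_1$ in $M\setminus F$ does \emph{not} reproduce $M$: the new left trajectory is forced through the second fork's $\SfS 7$ (through $t_2$ and then $z^2_{l,1}$, giving length $3$) instead of down the left boundary (length $2$), so $(M\setminus F)[S_1]\ne M$. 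Peeling the \emph{minimal} $\SfS 7$ (the one with top $z^1_{l,1}$) does work, because distributivity of the ideal below its square guarantees that no other fork interferes with its trajectories. So the fix is to replace ``maximal'' by ``minimal'' and to use the criterion of Lemma~\ref{L:minimal} as the hypothesis under which the removal, the SPS property of $L\setminus F$, and the identity $(L\setminus F)[S]=L$ are actually provable.
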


\section{Wide squares}\label{S:wide}

\begin{lemma}\label{L:moretwo}
Let $L$ be an SPS lattice. 
Let $S$ be a \emph{wide square}. 
Then $\bgg(S)  = \ol\bga_l(S)$ 
or $\bgg(S)  = \ol\bga_r(S)$ in $L[S]$.
\end{lemma}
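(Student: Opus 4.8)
The plan is to translate the claim into a statement about principal congruences in $L[S]$, namely $\bgg(S)=\con{t,i}$ versus $\ol\bga_r(S)=\con{a_r,i}$ (and the symmetric pair on the left), and to prove equality by establishing the two inclusions separately, using Lemma~\ref{L:cproj} to move between collapsed prime intervals and congruence-projectivity.

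First I would dispose of the easy inclusion inside the $\SfS 7$ associated with $S$, whose three coatoms below $i$ are $a_l$, $t$, $a_r$, with $t$ in the middle. A direct computation of the principal congruences of $\SfS 7$ shows that $\con{a_l,i}$ and $\con{a_r,i}$ each collapse the middle prime interval $[t,i]$; hence $[a_l,i]\cproj[t,i]$ and $[a_r,i]\cproj[t,i]$ already in this sublattice, and so in $L[S]$. By Lemma~\ref{L:cproj} this gives $\bgg(S)\le\ol\bga_l(S)$ and $\bgg(S)\le\ol\bga_r(S)$, and it remains to produce one reverse inclusion.

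This is where wideness enters. Since $S$ is wide, $i$ covers a third element $b$ in $L$. The only covers of $o$ are $a_l$ and $a_r$ by Lemma~\ref{L:known}, so $b\not\ge o$, and a short planarity argument rules out $b$ lying strictly between $a_l$ and $a_r$ in the left--right order; thus $b$ is to the left of $a_l$ or to the right of $a_r$, and these alternatives yield the two cases of the conclusion. I treat the right-hand one, aiming at $\bgg(S)=\ol\bga_r(S)$. By Lemma~\ref{L:known} the set $\set{a_l,a_r,b}$ generates an $\SfS 7$ in $L$ in which $a_r$ is the middle coatom; in particular $a_r\mm b\prec b$, $a_r\mm b\prec a_r$, and $a_r\jj b=i$. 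Working in $L[S]$ with $t\equiv i\pmod{\bgg(S)}$, meeting with $b$ gives $t\mm b\equiv b\pmod{\bgg(S)}$, so $[t\mm b,b]$ is collapsed. Since $t$ lies to the left of $a_r$ and $b$ to its right, planarity yields $t\mm b\le a_r$, whence $a_r\mm b\in[t\mm b,b]$ and therefore $a_r\mm b\equiv b\pmod{\bgg(S)}$. Joining this relation with $a_r$ gives $a_r=(a_r\mm b)\jj a_r\equiv b\jj a_r=i$, so $\bgg(S)$ collapses $[a_r,i]$. By Lemma~\ref{L:cproj}, $\ol\bga_r(S)=\con{a_r,i}\le\bgg(S)$, and with the easy inclusion we conclude $\bgg(S)=\ol\bga_r(S)$; the left-hand alternative is symmetric.

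The step I expect to be the main obstacle is the geometric input $t\mm b\le a_r$, equivalently $a_r\mm b\in[t\mm b,b]$: it depends on the planar left--right order and on locating $b$ correctly relative to the fork's middle element $t$. Pinning down the position of $b$ and justifying this meet inequality cleanly---ideally by a planarity lemma rather than ad hoc diagram chasing---together with the routine but slightly tedious $\SfS 7$ congruence computations behind the easy inclusion, is where the real work lies.
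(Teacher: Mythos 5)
Your argument is correct and is essentially the paper's proof: both inclusions come from two $\SfS 7$ sublattices of $L[S]$ with unit $i$, the one generated by $\set{a_l,t,a_r}$ giving $\bgg(S)\le\ol\bga_r(S)$ and the one containing the third cover giving the reverse inclusion. The step you flag as the main obstacle, $t\mm b\le a_r$, is immediate once you apply Lemma~\ref{L:known}(ii) to the triple $\set{t,a_r,b}$ in $L[S]$ rather than to $\set{a_l,a_r,b}$ in $L$: there $a_r$ is the middle coatom, $t\mm b$ is the bottom of that $\SfS 7$, and your element-wise computation is exactly the verification of $\con{a_r,i}\le\con{t,i}$ inside that sublattice, which is all the paper does.
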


\begin{proof}
Since $S$ is wide, 
the element $i$ covers an element $a$ in $L$, with $a \neq a_l, a_r$.
Either $a$ is to the left of $a_l$ or to the right of $a_r$,
let us assume the latter.
By~Lemma~\ref{L:known}, the set $\set{a_l,a_r,a}$ 
generates an $\SfS 7$ sublattice in $L$.

Then $\con{i,t} \leq \con{a_r,i}$, 
computed in the $\SfS 7$ sublattice generated by $\set{a_l,t,a_r}$
and $\con{i,t} \geq \con{a_r,i}$, 
computed in the $\SfS 7$ sublattice generated by $\set{a_r,t,a}$,
yielding $\bgg(S)  = \ol\bga_r(S)$.

The symmetric case yields $\bgg(S)  = \ol\bga_l(S)$.
\end{proof}

\begin{figure}[tbh]
  \centerline{\includegraphics{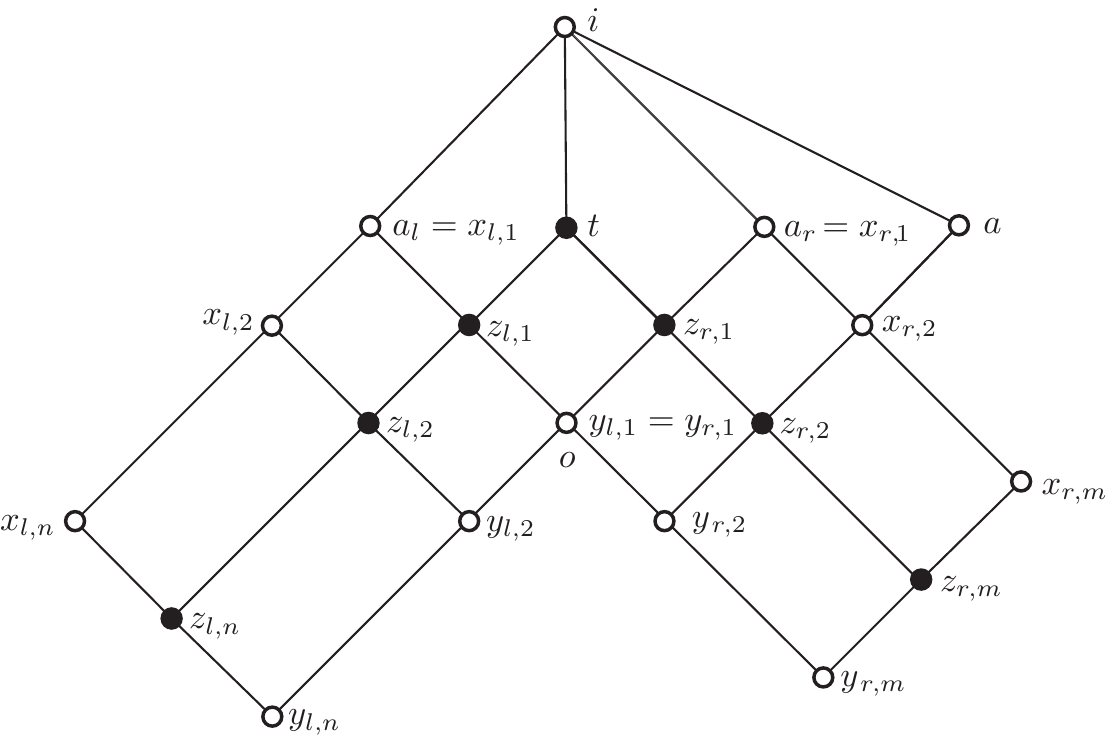}}
  \caption{Wide square}\label{F:widesquare}
\end{figure}

Now we prove Theorem~\ref{T:wide}.

By Theorem~\ref{T:extension}, 
the assumptions of Lemma~\ref{L:technical2}
hold for $L$ and $K = L[S]$. So~by~(P) of Lemma~\ref{L:technical2},
we have to show that for every prime interval $\fp$ of $L[S]$,
if~$\fp$ is not in~$L$, then
there exists a prime interval~$\fq$ of $L$ such that
$\con{\fp} = \con{\fq}$ in $L[S]$.

The prime intervals of $L[S]$ that are not in $L$ 
can be listed as follows:
\begin{align}
   [z_{l,1}, x_{l,1}],& \dots, [z_{l,n}, x_{l,n}];\label{E:set1}\\ 
   &[t, i]; \label{E:set2}\\
   [z_{r,1}, x_{r,1}],& \dots, [z_{r,m}, x_{r,m}];\label{E:set3}\\ 
   [y_{l,1}, z_{l,1}],& \dots, [y_{l,n}, z_{l,n}]; \label{E:set4}\\
   [y_{r,1}, z_{r,1}],& \dots, [y_{r,m}, z_{r,m}].\label{E:set5}
\end{align}

We choose the following prime intervals $\fq$ of $L$ to satisfy (P)
of Lemma~\ref{L:technical2}:

\begin{enumeratei}
\item for the prime intervals 
in the lists \eqref{E:set1}--\eqref{E:set3} and \eqref{E:set5}, 
choose $\fq = [a_l, i]$;

\item for the prime intervals in the list \eqref{E:set4}, 
choose $\fq = [a_r, i]$.
\end{enumeratei}

\section{Distributive covering squares}\label{S:Distributive}

Let $L$ be an SPS lattice. 
Let $S =\set{o, a_l, a_r, i}$ be a covering square of $L$. 
We call~$S$ \emph{distributive} if the ideal generated by $S$,
that is, $\id{i}$, is distributive.

Let $L$ be an SPS lattice. Let $A_1$ and $A_2$ be covering $\SfS 7$ sublattices of~$L$ 
in the covering squares $S_1$ and $S_2$, with unit elements $i_1$ and $i_2$, respectively. 
Let $A_2 < A_1$ mean that $i_2 < i_1$.
Let us call $A_1$ \emph{minimal} if $A_2 < A_1$
fails for any covering $\SfS 7$ sublattice $S_2$ of~$L$.

The following statement  
see G. Cz\'edli and E.\,T. Schmidt~\cite{CSa}.

\begin{lemma}\label{L:minimal}
Let $L$ be an SPS lattice 
and let $S$ be  be a covering square of $L$. 
Let $A$ be the $\SfS 7$ sublattice of~$L[S]$ associated with $S$.
Then $A$ is minimal if{}f $S$ is distributive.
\end{lemma}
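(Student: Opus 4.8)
The plan is to translate both properties into a count of elements with three lower covers and then compare $L$ with $L[S]$ edge by edge. First I record the detection principle: by Lemma~\ref{L:known}(ii)--(iii), a covering $\SfS 7$ sublattice of an SPS lattice with top $u$ is present exactly when $u$ has three lower covers, and these three covers generate that cover-preserving $\SfS 7$. Combining this with the Structure Theorem (Theorem~\ref{T:Structure Theorem}), an ideal $\id{u}$ of an SPS lattice is distributive iff no element of $\id{u}$ has three lower covers: if some $w \le u$ has three lower covers, the generated $\SfS 7$ is a non-distributive sublattice of $\id{u}$; conversely, if $\id{u}$ is non-distributive, then by the Structure Theorem it is obtained from a planar distributive lattice by at least one fork insertion, and the last such fork supplies an element of $\id{u}$ with three lower covers. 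In particular, $S$ is distributive iff no element of $\id{i}$ has three lower covers in $L$, while $A$ is minimal iff no element strictly below $i$ has three lower covers in $L[S]$.

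With this reformulation the lemma reduces to a single combinatorial statement: \emph{the elements strictly below $i$ that have three lower covers are the same in $L$ and in $L[S]$, except that the new element $t$ joins them precisely when $S$ is wide.} I would prove this by tracking how the insertions in the construction of $F[S]$ change lower-cover counts. An old element $w < i$ keeps its lower covers: every insertion either replaces a lower cover $x$ of $w$ by a newly inserted $z$ with $x \prec z \prec w$, or attaches a new element below one of the freshly added elements $v$ (via $y \prec v$), so the number of lower covers of $w$ is unchanged. Each inserted element $z_{l,k}$, $z_{r,k}$ lies in two new covering squares and has exactly two lower covers. The element $i$ gains $t$ as a lower cover; in the tight case $i$ then covers exactly $a_l, t, a_r$ and $t$ covers exactly $a_l, a_r$, so no element below $i$ acquires a third cover. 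In the wide case $i$ covers a third element $a$ in $L$, say to the right of $a_r$; the chain $a_r \prec t \prec i$ together with the covering square $\set{a_r \mm a, a, a_r, i}$ triggers the secondary insertion of an element $z'$ with $a_r \mm a \prec z' \prec a$ and $z' \prec t$, whence $t$ covers the three elements $a_l, a_r, z'$. Thus $t$ acquires three lower covers exactly in the wide case, and no inserted element other than $t$, and no old element, changes its cover count below $i$.

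The lemma then follows. If $S$ is distributive, then $\id{i}$ is distributive, so $S$ is tight (a wide square already contains the non-distributive $\SfS 7$ on $\set{a_l, a_r, a}$) and no element of $\id{i}$ has three lower covers in $L$; by the combinatorial heart, none does strictly below $i$ in $L[S]$ either, so $A$ is minimal. Conversely, if $S$ is not distributive, some element of $\id{i}$ has three lower covers in $L$: if that element is $i$ itself then $S$ is wide and $t < i$ acquires three lower covers in $L[S]$; if it is some $w < i$, then $w$ retains three lower covers in $L[S]$. Either way an element strictly below $i$ has three lower covers in $L[S]$, so $A$ is not minimal. The main obstacle is the combinatorial heart of the second paragraph: one must verify carefully that the cascade of secondary insertions never raises the cover count of an already-present element and never creates a second new three-cover element below $i$, while handling the symmetric left-hand wide case and checking that a pre-existing $\SfS 7$ strictly below $i$ is not disturbed into losing its cover-preserving form.
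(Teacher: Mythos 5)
The paper itself offers no proof of Lemma~\ref{L:minimal}; it only cites Cz\'edli--Schmidt \cite{CSa}, so your argument has to stand on its own. Your reduction of both sides to counting elements with three lower covers below $i$ is the right idea, and the first paragraph, together with the observation that old elements keep their lower-cover counts (a lower cover $x$ of an old $w$ only ever gets replaced by a new $z$ with $x \prec z \prec w$), is sound. The genuine error is in your treatment of the wide case. In the $\SfS 7$ sublattice $\set{o, z_{l,1}, z_{r,1}, a_l, a_r, t, i}$ the element $t$ covers $z_{l,1}$ and $z_{r,1}$ and is incomparable to $a_l$ and $a_r$; so your assertion that ``$t$ covers exactly $a_l, a_r$'' is false, the chain $a_r \prec t \prec i$ you use to trigger a secondary insertion does not exist, and in any case the secondary insertions of the construction only ever attach a new element $y$ with $y \prec v$ for $v$ one of the previously added $z$'s --- never an element covered by $t$. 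Consequently $t$ has exactly two lower covers in $L[S]$ whether $S$ is tight or wide, and the new three-cover element below $i$ that your converse direction relies on never materializes.

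What actually happens when $S$ is wide is that $i$ itself acquires a fourth lower cover in $L[S]$ (namely $a_l$, $t$, $a_r$ together with the extra cover $a$ inherited from $L$), so $L[S]$ contains a second cover-preserving $\SfS 7$ with the \emph{same} unit $i$, distinct from $A$. Non-minimality of $A$ has to be extracted from that, which forces one to read the ordering $A_2 < A_1$ as comparing the sublattices (allowing $i_2 = i_1$ with $A_2 \ne A_1$) rather than strictly comparing units. Under the literal reading $i_2 < i_1$, your reformulation ``$A$ is minimal iff no element strictly below $i$ has three lower covers in $L[S]$'' combined with the corrected cover counts would make the lemma fail for, say, $L = \SfS 7$ and $S$ one of its lateral covering squares: there $S$ is wide and $\id{i}$ is non-distributive, yet no element strictly below the unit of $A$ ever acquires three lower covers. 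So the wide case must be reargued from the behaviour of $i$, not of $t$, and the notion of minimality pinned down accordingly.
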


Let $L$ be an SPS lattice 
and let $S$ be a distributive covering square.
We define in $L[S]$ an equivalence relation $\bgg(S)$ as follows.
All equivalence classes of $\bgg(S)$ are singletons except for the following:
\begin{align}\label{E:seti}
\set{z_{l,1}, x_{l,1}},& \dots, \set{z_{l,n}, x_{l,n}};\\ 
&\set{t, i}; \label{E:setii}\\
\set{z_{r,1}, x_{r,1}},& \dots, \set{z_{r,m}, x_{r,m}}.\label{E:setiii}
\end{align}

\begin{lemma}\label{L:distr}
Let $L$ be an SPS lattice 
and let $S$ be a distributive covering square. 
Then $\bgg(S)$ is a congruence relation of $L[S]$. 
\end{lemma}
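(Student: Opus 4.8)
The plan is to verify that the equivalence relation $\bgg(S)$ defined by the equivalence classes \eqref{E:seti}--\eqref{E:setiii} is a congruence by applying Lemma~\ref{L:technical}. Since every equivalence class is an interval (each nontrivial class is a prime interval of the form $[z,x]$ or $[t,i]$, and the singletons are trivially intervals), Lemma~\ref{L:technical} applies once we check condition~(C${}_{\jj}$) and its dual (C${}_{\mm}$). Thus the whole proof reduces to a cover-by-cover verification: whenever $x \prec y \neq z$ with $x \equiv y \pmod{\bgg(S)}$, we must show $z \equiv y \jj z \pmod{\bgg(S)}$, and dually.

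First I would enumerate exactly which covering pairs $x \prec y$ are collapsed by $\bgg(S)$: these are precisely the prime intervals listed in \eqref{E:seti}--\eqref{E:setiii}, namely $[z_{l,k}, x_{l,k}]$, $[t,i]$, and $[z_{r,k}, x_{r,k}]$. For each such collapsed prime interval $[x,y]$ and each element $z$ with $x \prec z$, $z \neq y$, I would locate the element $y \jj z$ in the $L[S]$ diagram (Figure~\ref{F:forksdetails}) and confirm that $[z, y \jj z]$ is again one of the collapsed prime intervals. The key structural input is that $S$ is distributive: by Lemma~\ref{L:minimal}, the associated $\SfS 7$ is minimal, so the ideal $\id i$ is distributive, which rigidly constrains the local covering geometry and guarantees that joining a collapsed edge with a neighboring element lands on another collapsed edge rather than escaping into a singleton class. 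I expect the verification to split naturally into the case of the top $\SfS 7$ (the $[t,i]$ edge together with the $z_{l,1}, z_{r,1}$ edges) and the cases of the lower squares $\set{z_{l,k+1}, y_{l,k+1}, z_{l,k}, y_{l,k}}$ running down the left and right legs.

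The main obstacle will be confirming condition~(C${}_{\jj}$) (and dually (C${}_{\mm}$)) at the junctions where the collapsed edges meet the uncollapsed part of $L$, in particular along the chains $z_{l,1} \succ \dots \succ z_{l,n}$ and $z_{r,1} \succ \dots \succ z_{r,m}$. At each level $k$, the collapsed edge $[z_{l,k}, x_{l,k}]$ sits atop the covering square $\set{y_{l,k}, z_{l,k}, y_{l,k-1}?, \dots}$, and I must check that perspectivity of the collapsed edge does not force collapse of an edge I have declared a singleton. Here distributivity is essential: it rules out the wide-square configuration that, by Lemma~\ref{L:moretwo}, would make $\bgg(S)$ coincide with $\ol\bga_l(S)$ or $\ol\bga_r(S)$ and collapse additional prime intervals. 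Once the cover conditions are checked for every collapsed edge against each of its at-most-two upper (resp.\ lower) neighbors, Lemma~\ref{L:technical} immediately yields that $\bgg(S)$ is a congruence, completing the proof.
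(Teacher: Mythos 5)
Your proposal is correct and follows essentially the same route as the paper: apply Lemma~\ref{L:technical} to the interval-classed equivalence relation and verify (C${}_{\jj}$) and (C${}_{\mm}$) cover by cover along the edges $[z_{l,k},x_{l,k}]$, $[t,i]$, $[z_{r,k},x_{r,k}]$, with distributivity of $\id{i}$ (equivalently, minimality of the associated $\SfS 7$) supplying exactly the needed restriction that the relevant elements have at most two covers and cover at most two elements.
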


\begin{proof}

By Lemma~\ref{L:technical}, 
we have to verify (C${}_{\jj}$) and (C${}_{\mm}$).
To verify (C${}_{\jj}$), let 
$x$ be covered by $y \neq z$ in $L[S]$ 
and $\cng x=y(\bgg(S))$. If $x = z_{l,i}$ and $y = x_{l,i}$, 
where $1 < i \leq n$, then $z = x_{l,i-1}$ and
$\cng {z= x_{l,i-1}}= {y \jj z = x_{l,i-1}}(\bgg(S))$, 
because $\set{z_{l,i}, x_{l,i}}$ is in the list \eqref{E:seti}.
If $i = 1$, we proceed the same way with $z = t$ 
and the list.
We proceed ``on the right'' 
with the lists \eqref{E:set2} and \eqref{E:setiii}.
Finally, $x = t$ cannot happen because $t$ is covered only by one element.

The verification of (C${}_{\mm}$) is very similar.
\end{proof}

Note that we use implicitly that an element of $\fil{y}$
cannot be covered by three elements, see Lemma~\ref{L:known}(i).
Also an element of $\fil{i}$ cannot cover three elements,
because $\fil{i}$ is a planar distributive lattice.

Lemma \ref{L:distr} is closely related 
to the construction of the lattice $K_f$ from
$K_{f-1}$ in G. Cz\'edli~\cite[p. 339]{gC12}, 
where a fork is inserted into a distributive covering cell.

\section{Protrusions}

Let $L$ be an SPS lattice 
and let $S$ be a covering square~$S = \set{o, a_l, a_r, i}$ of~$L$.
We call $S$ a \emph{tight square} 
if $i$ covers exactly two elements, namely, $a_l$ and $a_r$, in $L$;
otherwise, $S$ is a \emph{wide square}.

For a tight square $S$, a \emph{protrusion} is an element
$x_{l,i}$, $i = 1, \dots, n$ 
(or symmetrically, $x_{r,i}$, $i = 1, \dots, m$),
such that $x_{l,i}$ covers three or more element in $L$
(equivalently, three or more element in $L[S]$); 
see Figure~\ref{F:protrusion}, 
especially for the notation $a_{l,i+2}$, shown for $i = 2$, 
for the element of $L$ covered by $x_{l,i}$ 
that is immediately left of $x_{l,i+1}$.

If $x_{l,i}$ is a protrusion and $x_{l,i+3}$ has two covers, 
we denote by $a_{l,i+3}$ the cover different from $x_{l,i+2}$; 
we define $a_{l,i+4}$, and so on, similarly. 
Let $i^*$ be the largest integer for which $a_{l,i*}$ is defined.
The elements $x_{l,i+2}$, \dots, $x_{l,i^*}$ are the \emph{extensions} 
of the protrusion. 
There is always one extension, namely, $x_{l,i+2}$. 
In Figure~\ref{F:protrusion}, 
$i = 2$ and $i^* = 6$, there are three extensions.

Let $P_l \ci \set{1,\dots, n}$ denote the set of all left protrusions,
that is $i \in P_l$ if{}f $x_{l,i}$ is a left-protrusion. 
We define $P_r$ symmetrically.

\begin{figure}[htp]
  \centerline{\includegraphics{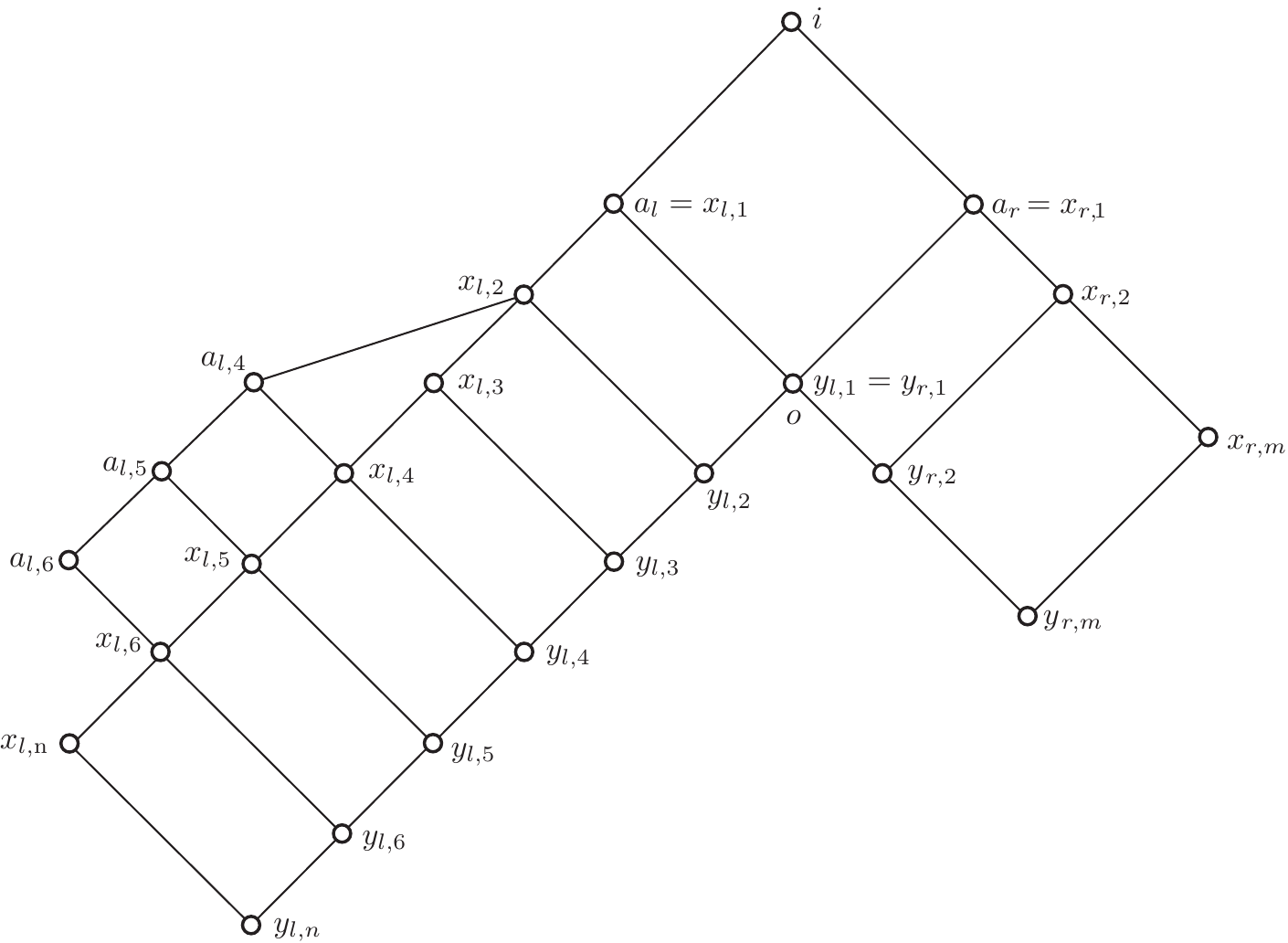}}
  \caption{Protrusion at $x_{l,2}$ and extensions $x_{l,4}$,
  $x_{l,5}$, $x_{l,6}$}
  \label{F:protrusion}
\end{figure}

\begin{figure}[b]
 \centerline{\includegraphics[scale=0.85]{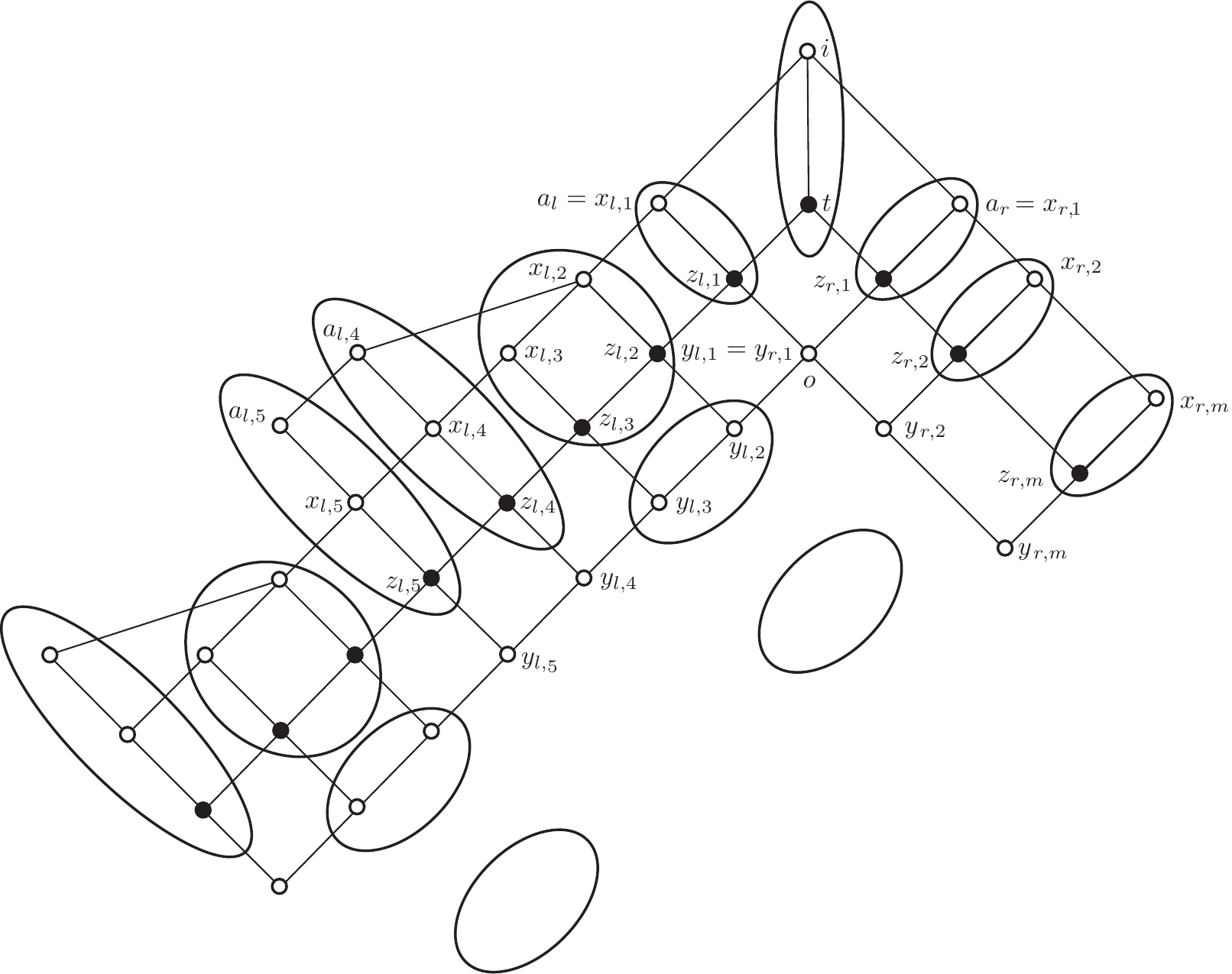}}
\caption{The congruence $\bgg(S)$ with two protrusions}\label{F:gamma}
\end{figure}

For every $i \in P_l$, define 
$\bgp_{l,i} = \consub{L}{y_{l,i}, y_{l,i+1}}$ and
$\bgp_l = \JJm{\bgp_{l,i}}{i \in P_l}$.

We define, symmetrically, $\bgp_{r,i}$ and $\bgp_r$. Finally,
let $\bgp = \bgp_l \jj \bgp_r$, the \emph{protrusion congruence}. 

We will also need the congruence $\ol\bgp_{l,i}$ and $\ol\bgp$,  
the congruences of $L[S]$ generated by $\bgp_{l,i}$ and $\ol\bgp$,
respectively. 

\begin{lemma}\label{L:pi}
Let $L$ be an SPS lattice and let $S$ be a tight square.
Let $i \in P_l$. 
In $L[S]$, all nontrivial congruence classes of $\ol\bgp_{l,i}$ 
except for $\set{{x_{l,i}}, x_{l,i+1}}$
and $\set{{y_{l,i}}, y_{l,i+1}}$ 
are in $\fil{y_{l,2} \mm y_{r,2}}$.
\end{lemma}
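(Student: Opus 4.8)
The plan is to translate the statement into a question about collapsed prime intervals and then to locate those intervals in the diagram of $L[S]$. By Lemma~\ref{L:cproj}, a prime interval $\fq$ of $L[S]$ is collapsed by $\ol\bgp_{l,i}=\consub{L[S]}{y_{l,i},y_{l,i+1}}$ exactly when $[y_{l,i+1},y_{l,i}]\cproj\fq$; since the classes of a congruence of a finite lattice are intervals, it is enough to find all such $\fq$ and to decide, for each resulting nontrivial class, whether its least element lies above $y_{l,2}\mm y_{r,2}$. So the proof reduces to a bookkeeping of congruence-perspectivities issuing from $[y_{l,i+1},y_{l,i}]$.

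First I would treat the protrusion locally. Since $x_{l,i}$ is a protrusion, it covers three elements of $L$, and by Lemma~\ref{L:known}(ii),(iii) they generate a cover-preserving $\SfS 7$ sublattice with top $x_{l,i}$, middle coatom $x_{l,i+1}$, and outer coatoms $y_{l,i}$ and $a_{l,i+2}$. Inside this $\SfS 7$ the interval $[y_{l,i+1},y_{l,i}]$ is up congruence-perspective to $[x_{l,i+1},x_{l,i}]$, so $\ol\bgp_{l,i}$ restricted to the $\SfS 7$ is the join-irreducible congruence generated by collapsing the middle coatom to the top, and a direct computation in $\SfS 7$ shows its nontrivial classes there to be exactly $\set{x_{l,i},x_{l,i+1}}$, $\set{y_{l,i},y_{l,i+1}}$, and the interior class $\set{a_{l,i+2},x_{l,i+2}}$, the bottom of the $\SfS 7$ remaining a singleton. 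The first two classes are the exceptions in the statement; they descend the left chain below the confluence point $y_{l,2}\mm y_{r,2}$ of the two chains under $o$.

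Next I would follow the remaining spread. The interior prime interval $[x_{l,i+2},a_{l,i+2}]$ propagates, through the covering squares formed by the extensions $x_{l,i+2},\dots,x_{l,i^*}$ and through the fork squares of $L[S]$, to further prime intervals; the corresponding classes, together with the classes carrying the inserted fork elements $z_{l,k}$, are what must be shown to lie in $\fil{y_{l,2}\mm y_{r,2}}$. The crux, and the main obstacle, is precisely this confinement: I would argue that every prime interval congruence-projective from $[y_{l,i+1},y_{l,i}]$, other than $[y_{l,i+1},y_{l,i}]$ and $[x_{l,i+1},x_{l,i}]$ themselves, has both endpoints above $y_{l,2}\mm y_{r,2}$. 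This is where planarity and semimodularity (and, if a global description of the diagram is needed, the Structure Theorem~\ref{T:Structure Theorem}) enter: they force the projectivities leaving the protrusion to travel into the interior, over $y_{l,2}\mm y_{r,2}$, rather than down the outer boundary. Verifying this case by case for the extension staircase and for the attached fork squares is the technical heart of the argument.
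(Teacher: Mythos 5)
Your reduction via Lemma~\ref{L:cproj} and your local computation in the $\SfS 7$ sublattice at the protrusion $x_{l,i}$ are both correct, but the proposal stops exactly where the lemma begins. The entire content of the statement is the confinement claim --- that every nontrivial class of $\ol\bgp_{l,i}$ other than $\set{x_{l,i},x_{l,i+1}}$ and $\set{y_{l,i},y_{l,i+1}}$ lies in $\fil{y_{l,2}\mm y_{r,2}}$ --- and for that claim you offer only ``I would argue that\dots'' and ``verifying this case by case\dots is the technical heart of the argument.'' No mechanism is supplied that actually controls where the congruence-perspectivities issuing from $[y_{l,i+1},y_{l,i}]$ can travel; a gesture toward ``planarity and semimodularity'' is not an argument, and the case analysis over the extension staircase and the attached fork squares is precisely what would have to be written down. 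As it stands, the proposal is a restatement of the problem together with a correct but peripheral local computation.

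For comparison, the paper avoids the projectivity chase altogether. It invokes the Structure Theorem~\ref{T:Structure Theorem} to present $L[S]$ as a planar distributive lattice $D$ with $n$ forks inserted and inducts on $n$: one peels off a fork $F[A]$ whose associated $\SfS 7$ is minimal (cf.\ Lemma~\ref{L:minimal}), obtaining a sublattice $L_1$ with $L_1[A]=L[S]$ that is built from $D$ by $n-1$ fork insertions; the induction hypothesis applies to $L_1$, and reinserting the fork at $A$ can only enlarge $\ol\bgp_{l,i}$ inside $\fil{o^A}$ while leaving the upper left and upper right boundaries of $\fil{i^A}$ unchanged, so nothing new appears outside $\fil{y_{l,2}\mm y_{r,2}}$. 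That induction is the missing idea; if you insist on the direct approach, you would still need to isolate and prove a confinement lemma of exactly this kind before your case analysis could get off the ground.
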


\begin{proof}
By the Structure Theorem (Theorem \ref{T:Structure Theorem}), 
the lattice $L[S]$ can be obtained 
from a planar distributive lattice $D$
by inserting $n$ forks. We induct on $n$. 

If $n=1$, there is no protrusion, so there is no $\bgp_{l,i}$. There is nothing to prove.

Let us assume that the statement holds for $n-1$. 
Let $A = \set{o^A, a_l^A, a_r^A, i^A}$ be a covering square of $L[S]$ such that $[o^A, i^A]$ is minimal $\SfS 7$ sublattice of~$L[S]$. 
Let $L_1 = L[S] - F[A]$. 
Then $L_1$ is a sublattice of $L[S]$
and $L_1[A] = L[S]$.

We can obtain $L_1$ from the planar distributive lattice $D$
by inserting $n-1$ forks. 
So the statement holds for $L_1$.
By inserting the fork at $A$, may make the congruence $\ol\bgp_{l,i}$ larger 
in $\fil{o^A}$ in $L_1[A] = L[S]$, 
but on the upper left and upper right boundaries of $\fil{i_A}$
there is no change, so there is no effect 
outside of the filter $\fil{y_{l,2} \mm y_{r,2}}$.
\end{proof}

\section{Tight squares}

\begin{lemma}\label{L:protrusion}
Let $L$ be an SPS lattice, $S$ a tight square of $L$.
Let $i \in P_l$. 
Then 
\begin{equation}\label{E:protrusioncong}
   \ol\bgp_{l,i} = \con{y_{l,i}, y_{l,i+1}} \leq \ol\bga_r(S).
\end{equation}

\end{lemma}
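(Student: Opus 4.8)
The plan is to establish the two parts of \eqref{E:protrusioncong} separately: first the equality $\ol\bgp_{l,i} = \con{y_{l,i}, y_{l,i+1}}$, which is essentially a matter of unwinding definitions, and then the inequality $\con{y_{l,i}, y_{l,i+1}} \leq \ol\bga_r(S)$, which is the substantive claim. For the equality, I would note that $\ol\bgp_{l,i}$ is by definition the congruence of $L[S]$ generated by $\bgp_{l,i} = \consub{L}{y_{l,i}, y_{l,i+1}}$; since $[y_{l,i}, y_{l,i+1}]$ is (up to the indexing convention relating these to a covering interval) a prime interval collapsed inside $L$, generating in $L[S]$ from $\bgp_{l,i}$ is the same as generating directly from the pair $(y_{l,i}, y_{l,i+1})$. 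So the first equality reduces to observing that the $L[S]$-congruence generated by $\bgp_{l,i}$ coincides with $\con{y_{l,i}, y_{l,i+1}}$.

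For the inequality $\con{y_{l,i}, y_{l,i+1}} \leq \ol\bga_r(S)$, my approach is to exhibit a congruence-projectivity witnessing that collapsing $[y_{l,i}, y_{l,i+1}]$ is forced by $\ol\bga_r(S) = \consub{L[S]}{a_r, i}$. By Lemma~\ref{L:cproj}, it suffices to find a sequence of congruence-perspectivities in $L[S]$ from $[a_r, i]$ down to the prime interval $[y_{l,i}, y_{l,i+1}]$, or equivalently to show $\fp \cproj [y_{l,i}, y_{l,i+1}]$ for a prime interval $\fp$ of $[a_r, i]$. Since $i \in P_l$, the element $x_{l,i}$ is a protrusion, so the geometry around the inserted elements $z_{l,i}, z_{l,i+1}$ and the covering squares they create (from the construction in Section~\ref{S:forks}) should supply the chain of perspectivities. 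Concretely, I would use that the fork at $S$ produces the covering square $\set{y_{l,i+1}, x_{l,i+1}, y_{l,i}, x_{l,i}}$ together with the inserted diamonds, and trace congruence-perspectivities from the right side $[a_r, i]$ across the $\SfS 7$ top and then down the left column of covering squares to reach $[y_{l,i}, y_{l,i+1}]$.

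The main obstacle I expect is controlling the path of perspectivities across the protrusion. A protrusion means $x_{l,i}$ has three or more covers, so the local structure is an $\SfS 7$ rather than a single covering square (by Lemma~\ref{L:known}(ii)), and the relevant extensions $x_{l,i+2}, \dots, x_{l,i^*}$ complicate the picture between $a_r$ and the left column. Here I would lean on Lemma~\ref{L:pi}, which confines the nontrivial classes of $\ol\bgp_{l,i}$ (apart from the two explicitly listed doubletons) to the filter $\fil{y_{l,2} \mm y_{r,2}}$; this localization lets me argue within a manageable sublattice rather than all of $L[S]$, and it is the key to ruling out that collapsing $[y_{l,i}, y_{l,i+1}]$ forces anything incompatible with $\ol\bga_r(S)$.

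Finally, I would verify the comparison at the level of generating prime intervals using Lemma~\ref{L:technical2}-style reasoning in reverse: rather than checking every class, I would confirm that each prime interval collapsed by $\con{y_{l,i}, y_{l,i+1}}$ is also collapsed by $\ol\bga_r(S)$, using that $\ol\bga_r(S)$ already collapses the entire top $\SfS 7$ and, via the protrusion's $\SfS 7$ structure, propagates down into the $[y_{l,i}, y_{l,i+1}]$ interval. The delicate point throughout is orientation—keeping straight which side ($a_l$ versus $a_r$) the perspectivities emanate from—since the asymmetry in \eqref{E:protrusioncong} (a \emph{left} protrusion bounded by the \emph{right} congruence $\ol\bga_r(S)$) is exactly what makes the statement nontrivial and mirrors the computation already seen in the proof of Lemma~\ref{L:moretwo}.
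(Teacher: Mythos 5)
Your overall framework is the right one (by Lemma~\ref{L:cproj} it suffices to produce a congruence-projectivity from $[a_r,i]$ to $[y_{l,i+1},y_{l,i}]$ in $L[S]$), and the first half, $\ol\bgp_{l,i}=\con{y_{l,i},y_{l,i+1}}$, is indeed just unwinding the definition. But the proposal misses the one idea that makes the inequality true, and the path you describe would not reach the target interval. Tracing perspectivities ``across the $\SfS 7$ top and then down the left column of covering squares'' only yields the \emph{vertical} intervals: $[a_r,i]\cperspdn[o,a_l]=[y_{l,1},x_{l,1}]$ and then down to $[y_{l,k},x_{l,k}]$ for each $k$, so you obtain $\ol\bga_r(S)\geq\con{y_{l,k},x_{l,k}}$. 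The interval you need, the \emph{horizontal} prime interval $[y_{l,i+1},y_{l,i}]$, is perspective to $[x_{l,i+1},x_{l,i}]$ and lies in a different perspectivity class; in the absence of a protrusion there is no congruence-perspectivity connecting the two classes (this is precisely why $\bgg(S)$ is a new join-irreducible congruence for a tight square). The crux of the paper's proof is that the protrusion is not an obstacle but the mechanism: since $x_{l,i}$ has the third lower cover $a_{l,i+2}$, Lemma~\ref{L:known}(ii)--(iii) gives a cover-preserving $\SfS 7$ with top $x_{l,i}$ and middle coatom $x_{l,i+1}$, and computing inside it, $y_{l,i}\equiv x_{l,i}$ forces $a_{l,i+2}\mm y_{l,i}\equiv a_{l,i+2}$ and hence $y_{l,i+1}\equiv a_{l,i+2}\jj y_{l,i+1}=x_{l,i}$, so $\con{y_{l,i},x_{l,i}}\geq\con{y_{l,i+1},y_{l,i}}$. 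Concatenated with the chain down the left arm, this yields \eqref{E:protrusioncong}.

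Two further points. First, the appeal to Lemma~\ref{L:pi} does no work here: that lemma confines the nontrivial classes of $\ol\bgp_{l,i}$ to a filter, which is irrelevant to showing that $\ol\bga_r(S)$ collapses $[y_{l,i+1},y_{l,i}]$; likewise the extensions $x_{l,i+2},\dots,x_{l,i^*}$ play no role in this lemma. Second, the phrasing ``ruling out that collapsing $[y_{l,i},y_{l,i+1}]$ forces anything incompatible with $\ol\bga_r(S)$'' has the logic backwards: the claim $\con{y_{l,i},y_{l,i+1}}\leq\ol\bga_r(S)$ requires showing that collapsing $[a_r,i]$ \emph{forces} collapsing $[y_{l,i+1},y_{l,i}]$, not that the latter is harmless.
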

\begin{proof}
Computing in the $\SfS 7$ sublattice 
generated by $\set{x_{l,i},y_{l,i},a_{l,i+2}}$, 
we get that 
\[
   \con{x_{l,i}, x_{l,i+1}} \leq \con{x_{l,i}, y_{l,i+1}}.
\]
Since 
\[
   \con{a_r, i} = \con{y_{l,i}, y_{l,i+1}},
\]
\[
   \con{x_{l,i}, x_{l,i+1}} = \con{y_{l,i}, y_{l,i+1}},
\]
the statement follows.
\end{proof}

Now comes the crucial definition of the equivalence relation 
$\bgg_S$ on $L[S]$:

Define 
\begin{equation}
   i/\bgg_S = \set{i, t}.\label{E:i}
   \end{equation}

If $x_{l,i}$ and $x_{l,i-1}$ are not protrusions 
and $x_{l,i}$ is not an extension,
\begin{equation}
   x_{l,i}/\bgg_S = \set{x_{l,i}, z_{l,i}}.\label{E:ii}
\end{equation}

If $x_{l,i}$ is a protrusion,
\begin{align}
   x_{l,i}/\bgg_S 
   &= \set{x_{l,i}, x_{l,i+1},z_{l,i},z_{l,i+1}};\label{E:iii}\\
   y_{l,i}/\bgg_S &= \set{y_{l,i}, y_{l,i+1}};\label{E:iv}\\
   \intertext{if $x_{l,j}$ is an extension, 
   that is, $i+2 \leq j \leq i^*$, then }
    x_{l,i+2}/\bgg_S 
   &= \set{a_{l,i+2}, x_{l,i+2}, z_{l,i+2}}.\label{E:v}
\end{align}

We define $x_{r,i}/\bgg_S$, $y_{r,i}/\bgg_S$, 
and $x_{r,i+2}/\bgg_S$ symmetrically.

Let $x/\bgg_S$ be as defined above; let  
\begin{equation}\label{E:vi}
   x/\bgg_S = x/\bgp,
\end{equation}
otherwise, see Figure~\ref{F:gamma}.

Clearly, $\bgp \leq \bgg_S$.

\begin{lemma}\label{L:gamma}
Let $L$ be an SPS lattice. 
Let $S$ be a \emph{tight square}. 
Then $\bgg_S = \bgg(S)$ is a congruence relation on $L[S]$.
\end{lemma}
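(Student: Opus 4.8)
The plan is to prove that $\bgg_S$ is a congruence exactly as in the proof of Lemma~\ref{L:distr}, by appealing to Lemma~\ref{L:technical}, and then to identify it with $\bgg(S)$. First I would confirm that every block in \eqref{E:i}--\eqref{E:vi} is an interval of $L[S]$: the block $\set{i,t}$ and the blocks \eqref{E:ii} are prime intervals; the protrusion block \eqref{E:iii} is the covering square $[z_{l,i+1},x_{l,i}]$, since $x_{l,i}\succ x_{l,i+1},z_{l,i}$ and $x_{l,i+1},z_{l,i}\succ z_{l,i+1}$ come from the two covering squares created at each step of the $F[S]$ construction; the block \eqref{E:iv} is the prime interval $[y_{l,i+1},y_{l,i}]$; the block \eqref{E:v} is a length-two chain read off Figure~\ref{F:protrusion}; and the remaining blocks, taken from \eqref{E:vi}, are intervals because $\bgp$ is a congruence. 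Since $\bgg_S$ is an equivalence relation whose classes are intervals, Lemma~\ref{L:technical} reduces the congruence claim to verifying \eqref{E:cover}, that is (C${}_{\jj}$), together with its dual (C${}_{\mm}$).

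For (C${}_{\jj}$), I would run through every covering edge $x\prec y$ with $x\equiv y\pmod{\bgg_S}$ and every further cover $z\neq y$ of $x$, checking that $z\equiv y\jj z\pmod{\bgg_S}$; by Lemma~\ref{L:known}(i) the element $x$ has at most one such further cover $z$, so the case analysis is finite. I would organize it by the position of $x$. For the vertical edges $z_{l,i}\prec x_{l,i}$ of the blocks \eqref{E:ii} and for $[t,i]$, the argument is the one in Lemma~\ref{L:distr}: applying (C${}_{\jj}$) at $x=z_{l,i}$ with $y=x_{l,i}$ and the remaining cover $z=z_{l,i-1}$ yields $z_{l,i-1}\equiv x_{l,i-1}$, because $z_{l,i-1}\jj x_{l,i}=x_{l,i-1}$, and this collapse is exactly the next block up the strip. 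The genuinely new cases sit at a protrusion $x_{l,i}$ and along its extensions, where the square \eqref{E:iii}, the bottom edges \eqref{E:iv}, the third lower cover $a_{l,i+2}$, and the extension chains \eqref{E:v} all interact. I would treat the internal edges of \eqref{E:iii} directly, each propagating to another edge already inside the same block or inside the adjacent block above; and for the edges $[y_{l,i+1},y_{l,i}]$ I would invoke Lemma~\ref{L:protrusion}, which gives $\ol\bgp_{l,i}=\con{y_{l,i},y_{l,i+1}}\leq\ol\bga_r(S)$, together with Lemma~\ref{L:pi}, which confines the remaining nontrivial classes of $\ol\bgp_{l,i}$ to $\fil{y_{l,2}\mm y_{r,2}}$. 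Since $\bgp\leq\bgg_S$, these two facts guarantee that any collapse propagated leftward out of the protrusion, along $a_{l,i+2}$ and the extensions, lands inside a block already on the list and does not leak outside it.

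The verification of (C${}_{\mm}$) is dual, running downward through the same cells and using the right-hand analogues of Lemmas~\ref{L:protrusion} and~\ref{L:pi}; one also uses the two remarks following Lemma~\ref{L:distr}, that no element of the relevant filter has three covers and no element above $i$ covers three elements, so the cells meet as drawn. This proves that $\bgg_S$ is a congruence. To see $\bgg_S=\bgg(S)$, note that $\set{i,t}$ is a block, so $(t,i)\in\bgg_S$ and hence $\bgg(S)=\consub{L[S]}{t,i}\leq\bgg_S$. For the reverse inclusion I would show that every prime interval collapsed by $\bgg_S$ is collapsed by $\con{t,i}$: for each such $\fq$ I would exhibit a congruence-projectivity $[t,i]\cproj\fq$ and apply Lemma~\ref{L:cproj}, tracing the projectivity down the fork through the very cells used in the (C${}_{\jj}$) verification, and using $\con{x_{l,i},x_{l,i+1}}=\con{y_{l,i},y_{l,i+1}}$ from Lemma~\ref{L:protrusion} to reach the $y$-edges and the extensions.

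The main obstacle is the protrusion-and-extension bookkeeping in (C${}_{\jj}$) and (C${}_{\mm}$): one must check simultaneously that every collapse forced by the cover conditions is already recorded in \eqref{E:i}--\eqref{E:vi}, so that $\bgg_S$ is genuinely closed, and that nothing more is forced, so that the two inclusions proving $\bgg_S=\bgg(S)$ are tight. Controlling how $\bgp$ propagates beyond the immediate fork, which is exactly what Lemmas~\ref{L:pi} and~\ref{L:protrusion} are designed to do, is the delicate point.
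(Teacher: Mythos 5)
Your proposal follows essentially the same route as the paper: reduce to conditions (C${}_{\jj}$) and (C${}_{\mm}$) via Lemma~\ref{L:technical} and then run a case analysis over the block types \eqref{E:i}--\eqref{E:vi}, with the protrusion/extension cells handled locally and the remaining blocks disposed of because $\bgp$ is already a congruence. The only real difference is that you additionally argue the identification $\bgg_S=\consub{L[S]}{t,i}$ by congruence-projectivity, a step the paper's proof leaves implicit (it is essentially supplied later by Lemma~\ref{L:G}); this is a reasonable, correct addition rather than a divergence.
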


\begin{proof}
Since $\bgg_S$ is an equivalence relation 
with intervals as equivalence classes,
by Lemma \ref{L:technical}, 
we only have to verify (C${}_{\jj}$) and (C${}_{\mm}$).

\emph{To verify} (C${}_{\jj}$),
let $v \neq w$ cover $u$ and let $\cng u = v (\bgg_S)$.
Then we distinguish six cases according to \eqref{E:i}--\eqref{E:vi}
in the definition of $\bgg_S$.

Case \eqref{E:i}: $u = t$. This cannot happen 
because $t$ has only one cover.

Case \eqref{E:ii}: $u = z_{l,i}$, $v = x_{l,i}$. 
Then $w = z_{l,i-1}$ and so 
\[
   \cng {w = z_{l,i-1}} = {v \jj w = x_{l,i-1}} (\bgg_S)
\]
by \eqref{E:ii}.  

Case \eqref{E:iii}: $x_{l,i}$ is a protrusion and 
$u,v \in \set{x_{l,i}, x_{l,i+1},z_{l,i},z_{l,i+1}}$.
 If $u = z_{l,i}$, $v = x_{l,i}$
or if $u = z_{l,i+1}$, $v = x_{l,i+1}$, 
we proceed as in Case \eqref{E:ii}. 
$u = x_{l,i+1}$ cannot happen because $x_{l,i+1}$ has only one cover. 
So we are left with $u = z_{l,i+1}$, $v = z_{l,i}$. 
Then $w = x_{l,i+1}$ and 
\[
   \cng {w = z_{l,i+1}} = {v \jj w = x_{l,i}} (\bgg_S).
\]

Case \eqref{E:iv}: $x_{l,i}$ is a protrusion and 
$u,v \in \set{y_{l,i}, y_{l,i+1}}$. 
Then $u = y_{l,i+1}$, $v = y_{l,i}$. Then $w = x_{l,i+1}$ and 
\[
   \cng {w = z_{l,i+1}} = {v \jj w = x_{l,i}} (\bgg_S)
\]
by \eqref{E:iii}.

Case \eqref{E:v}: $x_{l,i}$ is a protrusion and 
$u,v \in \set{a_{l,i+2}, x_{l,i+2}, z_{l,i+2}}$.
In this case, either $u = z_{l,i+2}$, $v = x_{l,i+2}$
or $u = x_{l,i+2}$, $v = a_{l,i+2}$. 
If $u = z_{l,i+2}$, $v = x_{l,i+2}$, then $w = z_{l,i+1}$ and 
\[
   \cng {w = x_{l,i+1}} = {v \jj w = x_{l,i+1}} (\bgg_S)
\]
by \eqref{E:iii}. 
If $u = x_{l,i+2}$, $v = a_{l,i+2}$, then $w = x_{l,i+1}$ and
\[
   \cng {w = x_{l,i+1}} = {v \jj w = x_{l,i}} (\bgg_S)
\]
by \eqref{E:iii}.

Case \eqref{E:vi}: $u, v \in x/\bgp$. 
Then  $\cng w = v \jj w (\bgp)$ since $\bgp$ is a congruence.

\emph{To verify} (C${}_{\mm}$), let $u$ cover $v \neq w$ 
and let $\cng v = u (\bgg_S)$. Then we again distinguish six cases.

Case \eqref{E:i}: $v = t$, $u = i$. 
Since $S$ is tight, $w = x_{l,1}$ or symmetrically. 
Then 
\[
   \cng {w = x_{l,1}} = {v \mm w = z_{l,1}} (\bgg_S)
\]
by \eqref{E:ii}.

Case \eqref{E:ii}: $v = z_{l,i}$, $u = x_{l,i}$. 
Then $w = x_{l,i-1}$ and so 
\[
   \cng {w = x_{l,i+1}} = {v \mm w = z_{l,i+1}} (\bgg_S)
\]
by \eqref{E:ii}.  

Case \eqref{E:iii}: $x_{l,i}$ is a protrusion and 
$u,v \in \set{x_{l,i}, x_{l,i+1},z_{l,i},z_{l,i+1}}$.
If $u = z_{l,i}$, $v = x_{l,i}$
or if $u = z_{l,i+1}$, $v = x_{l,i+1}$, 
we proceed as in Case \eqref{E:ii}. 
If $u = x_{l,i}$, $v = x_{l,i+1}$, then $w = z_{l,i}$ and
\[
   \cng {w = z_{l,i}} = {v \mm w = z_{l,i+1}} (\bgg_S)
\]
by \eqref{E:iii}.
Finally, let $u = z_{l,i}$, $v = z_{l,i+1}$. 
Then $w = y_{l,i}$ and 
\[
   \cng {w = y_{l,i}} = {v \jj w = y_{l,i+1}} (\bgg_S)
\]
by \eqref{E:iv}.

Case \eqref{E:iv}: $x_{l,i}$ is a protrusion and 
$u,v \in \set{y_{l,i}, y_{l,i+1}}$. 
Then $u = y_{l,i}$, $v = y_{l,i}$ and 
$
   \cng w = {v \mm w = y_{l,i+1}} (\bgg_S)
$
by \eqref{E:vi}.

Case \eqref{E:v}: $x_{l,i}$ is a protrusion and 
$u,v \in \set{a_{l,i+2}, x_{l,i+2}, z_{l,i+2}}$. 
If $u = x_{l,i+2}$, $v = z_{l,i+2}$, 
we proceed as in Case \eqref{E:ii}.
Otherwise, $u = a_{l,i+2}$, $v = x_{l,i+2}$ and
\[
   \cng {w = x_{l,i+1}} = {v \mm w = x_{l,i}} (\bgg_S)
\]
by \eqref{E:iii}.

Case \eqref{E:vi}: $u, v \in x/\bgp$. 
Then  $\cng w = v \mm w (\bgp)$ since $\bgp$ is a congruence.
\end{proof}

We also need to describe $\ol\bgp_{l,i}$.
\begin{lemma}\label{L:}
Let $L$ be an SPS lattice, $S$ a tight square of $L$.
Let $i \in P_l$. 
Then the congruence classes of $\ol\bgp_{l,i}$ in $L[S]$ 
are the congruence classes of $\bgp_{l,i}$ in $L$
and
\begin{equation}\label{E:olp}
   \set{x_{l,i}, x_{l,i+1}}.
\end{equation}
Moreover, neither $\bgp_{l,i}$ nor $\ol\bgp_{l,i}$ 
has any nontrivial congruence classes outside of~$\fil{i}$. 
In particular, $\ol\bgp_{l,i} < \bgg(S)$.
\end{lemma}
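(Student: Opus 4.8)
The plan is to derive all three assertions from one explicit description of the classes of $\ol\bgp_{l,i}$, obtained by following the prime intervals that the generator $[y_{l,i+1}, y_{l,i}]$ collapses.

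\emph{The classes.} By Theorem~\ref{T:extension}, $\bgp_{l,i}$ is the restriction of $\ol\bgp_{l,i}$ to $L$, so the classes of $\ol\bgp_{l,i}$ that meet $L$ are exactly the classes of $\bgp_{l,i}$; only the fork elements $t$ and the $z_{l,\bullet}$, $z_{r,\bullet}$ remain to be placed. By Lemma~\ref{L:protrusion} we have $\ol\bgp_{l,i} = \con{y_{l,i}, y_{l,i+1}}$, so by Lemma~\ref{L:cproj} a prime interval $\fq$ of $L[S]$ is collapsed if and only if $[y_{l,i+1}, y_{l,i}] \cproj \fq$. I would follow this congruence-projectivity through the lower and then the upper of the two covering squares created when $z_{l,i}$ is inserted, obtaining $[y_{l,i+1}, y_{l,i}] \cperspup [z_{l,i+1}, z_{l,i}] \cperspup [x_{l,i+1}, x_{l,i}]$. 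The only prime interval occurring here that does not already lie in $L$ is the middle one, so this produces precisely one further nonsingleton class, the extra class \eqref{E:olp}; in particular $t$ remains a singleton.

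\emph{Localization and completeness.} The delicate point --- and the main obstacle --- is to show that the trajectory of $[y_{l,i+1}, y_{l,i}]$ stops where claimed, so that the list above is complete and confined to $\fil{i}$. Here the hypothesis $i \in P_l$ is essential: $x_{l,i}$ is a protrusion, so by Lemma~\ref{L:known}(ii) its three lower covers generate an $\SfS 7$ sublattice, of which $[x_{l,i+1}, x_{l,i}]$ is an edge. The two cells abutting $[x_{l,i+1}, x_{l,i}]$ are the upper fork square, from which the trajectory arrived, and the left cell of this $\SfS 7$; across the latter the trajectory turns downward to the left-hand edge at $a_{l,i+2}$ instead of climbing toward $i$. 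By planarity these are the only two cells on that edge, so the trajectory never climbs above $x_{l,i}$ and thereafter runs through intervals of $L$ only. Hence there is no collapse of $[t,i]$, of any further $z$-interval, or of any interval on the right-hand side, and neither $\bgp_{l,i}$ nor $\ol\bgp_{l,i}$ has a nontrivial class outside $\fil{i}$. Lemma~\ref{L:pi}, which already confines all but the two level-$i$ classes to $\fil{y_{l,2}\mm y_{r,2}}$, can be invoked to corroborate completeness.

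\emph{The strict inclusion.} Finally, $\ol\bgp_{l,i} \leq \bgg(S)$: since $i \in P_l$, the class \eqref{E:iv} shows that $\bgg(S)$ collapses $[y_{l,i+1}, y_{l,i}]$, whence $\con{y_{l,i}, y_{l,i+1}} = \ol\bgp_{l,i} \leq \bgg(S)$. The inclusion is strict because, by the class description above, $t$ is a singleton class of $\ol\bgp_{l,i}$, so $t \not\equiv i \pmod{\ol\bgp_{l,i}}$, whereas $\bgg(S)$ has the class $\set{t, i}$ by \eqref{E:i}. Therefore $\ol\bgp_{l,i} < \bgg(S)$.
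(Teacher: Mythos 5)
The paper's own proof of this lemma is the single word ``Trivial,'' so there is no argument of record to measure yours against; on its own terms, your strategy---follow the congruence-projectivity orbit of the generator $[y_{l,i+1},y_{l,i}]$ and control it with the $\SfS 7$ at the protrusion---is the right one, and your derivation of the strict inequality $\ol\bgp_{l,i}<\bgg(S)$ from ``$t$ is a singleton class'' is correct. Still, two things need repair. The smaller one: your own chain $[y_{l,i+1},y_{l,i}]\cperspup[z_{l,i+1},z_{l,i}]\cperspup[x_{l,i+1},x_{l,i}]$ exhibits $\set{z_{l,i+1},z_{l,i}}$, not $\set{x_{l,i},x_{l,i+1}}$, as the nonsingleton class consisting of new elements; since $\set{y_{l,i+1},x_{l,i+1},y_{l,i},x_{l,i}}$ is already a covering square of $L$, the set \eqref{E:olp} is collapsed by $\bgp_{l,i}$ in $L$ itself and is not ``extra.'' You cannot simply label the middle interval of your chain as \eqref{E:olp}; the proof must state which set it establishes as the additional class (and, if you believe \eqref{E:olp} is a typo for $\set{z_{l,i},z_{l,i+1}}$, say so explicitly).

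The larger gap is in the completeness step, which is the entire content of the lemma. You verify only that the orbit does not climb above $x_{l,i}$. But, as you yourself note, it continues from $[x_{l,i+1},x_{l,i}]$ down to $[x_{l,i+2},a_{l,i+2}]$ and then along the extension ladder $[x_{l,j},a_{l,j}]$, $j\le i^*$; each of these edges is adjacent to the new element $z_{l,j}$, and you must rule out that $\con{x_{l,j},a_{l,j}}$ collapses $[z_{l,j},x_{l,j}]$. This is precisely the point in dispute, not something that follows from ``thereafter runs through intervals of $L$ only'': the congruence $\bgg(S)$ \emph{does} collapse $[z_{l,i+2},x_{l,i+2}]$, by the class \eqref{E:v}, and if $\ol\bgp_{l,i}$ did so as well, then the perspectivities $[z_{l,j+1},x_{l,j+1}]\cperspup[z_{l,j},x_{l,j}]$ and $[z_{l,1},a_l]\cperspup[t,i]$ would force $\ol\bgp_{l,i}\geq\bgg(S)$, destroying both the class description and the strictness claim. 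The downward branch of the orbit issuing from $[y_{l,i+1},y_{l,i}]$ inside $\id{y_{l,i}}$, which could in principle return to the lower fork elements $z_{l,j}$ with $j>i+1$, is not examined at all. Finally, Lemma~\ref{L:pi} cannot ``corroborate completeness'': it places the remaining nontrivial classes \emph{inside} $\fil{y_{l,2}\mm y_{r,2}}$, a filter containing every element of $F[S]$, so it excludes none of the problematic prime intervals.
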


\begin{proof}
Trivial.
\end{proof}

Let $G$ denote the set of prime intervals of $L[S]$ listed in 
\eqref{E:seti}--\eqref{E:setiii}. 

\begin{lemma}\label{L:G}
Let $L$ be an SPS lattice. 
Let $S$ be a \emph{tight square}. 
For a prime interval $\fp$ of $L[S]$, 
we have $\bgg(S) = \con{\fp}$ if{}f $\fp \in G$.
\end{lemma}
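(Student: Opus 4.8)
The plan is to prove the two implications separately, working throughout from the explicit list of congruence classes of $\bgg(S)=\bgg_S$ given in \eqref{E:i}--\eqref{E:vi}. For $\fp\in G$ I would show that every interval in $G$ generates $\con{t,i}=\bgg(S)$ by a chain of mutual congruence-perspectivities. In the top sublattice $\SfS 7$ one has $[t,i]\cperspdn[z_{l,1},x_{l,1}]$ (since $a_l=x_{l,1}\le i$ and $t\mm a_l=z_{l,1}$) and $[z_{l,1},x_{l,1}]\cperspup[t,i]$ (since $z_{l,1}\le t$ and $a_l\jj t=i$), whence $\con{t,i}=\con{z_{l,1},x_{l,1}}$, and symmetrically $\con{t,i}=\con{z_{r,1},x_{r,1}}$. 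Climbing each ladder, the rungs $[z_{l,k+1},x_{l,k+1}]$ and $[z_{l,k},x_{l,k}]$ are opposite sides of the covering square $\set{z_{l,k+1},x_{l,k+1},z_{l,k},x_{l,k}}$ produced by the construction, hence mutually perspective, and an induction on $k$ gives $\con{z_{l,k},x_{l,k}}=\bgg(S)$ for all $k$; the right ladder is symmetric.

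For the converse, let $\fp\notin G$. Since $\con{\fp}\le\bgg(S)$ holds exactly when $\bgg(S)$ collapses $\fp$, any $\fp$ not collapsed by $\bgg(S)$ automatically satisfies $\con{\fp}\not\le\bgg(S)$, hence $\con{\fp}\ne\bgg(S)$; this disposes at once of all intervals of $L$ lying in no nontrivial $\bgg(S)$-class, of the intervals $[y_{l,i},z_{l,i}]$ and $[y_{r,j},z_{r,j}]$, and of $[z_{l,1},t]$ and $[z_{r,1},t]$ (whose endpoints lie in distinct classes). So I may assume $\fp$ lies inside a nontrivial $\bgg(S)$-class and aim to prove the sharper statement $\con{\fp}<\bgg(S)$.

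By \eqref{E:i}--\eqref{E:vi}, a class of type \eqref{E:i} or \eqref{E:ii} is a two-element interval whose only prime interval already lies in $G$; hence $\fp$ must sit in a protrusion or extension class \eqref{E:iii}--\eqref{E:v} or in a class \eqref{E:vi}. In the first situation the prime intervals of the class that are not in $G$ are $[z_{l,i+1},z_{l,i}]$, $[x_{l,i+1},x_{l,i}]$, $[y_{l,i+1},y_{l,i}]$, and $[x_{l,i+2},a_{l,i+2}]$ (with their right-hand analogues); using the covering squares $\set{y_{l,i+1},z_{l,i+1},y_{l,i},z_{l,i}}$, $\set{z_{l,i+1},x_{l,i+1},z_{l,i},x_{l,i}}$, and the extension square $\set{x_{l,i+2},a_{l,i+2},x_{l,i+1},x_{l,i}}$, each of these is congruence-perspective to $[y_{l,i},y_{l,i+1}]$, so $\con{\fp}=\ol\bgp_{l,i}$ (respectively $\ol\bgp_{r,j}$) by Lemma~\ref{L:protrusion}. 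In the second situation $\fp$ is collapsed by $\ol\bgp$, so $\con{\fp}\le\ol\bgp$.

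Everything therefore reduces to the strict inequalities $\ol\bgp_{l,i}<\bgg(S)$ and $\ol\bgp<\bgg(S)$, and this is the step I expect to be the main obstacle. The first is recorded in the preceding lemma. For the second I would argue from Lemma~\ref{L:protrusion}: since $\ol\bgp_{l,i}\le\ol\bga_r(S)$ and $\ol\bgp_{r,j}\le\ol\bga_l(S)$, we get $\ol\bgp=\JJm{\ol\bgp_{l,i}}{i\in P_l}\jj\JJm{\ol\bgp_{r,j}}{j\in P_r}\le\ol\bga_l(S)\jj\ol\bga_r(S)$, and the delicate point is to verify that $[t,i]$ is \emph{not} collapsed by $\ol\bga_l(S)\jj\ol\bga_r(S)$, that is, $\bgg(S)\not\le\ol\bga_l(S)\jj\ol\bga_r(S)$; granting this, $\con{\fp}\le\ol\bgp<\bgg(S)$. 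If instead the join-irreducibility of $\bgg(S)$ from Theorem~\ref{T:tight} is available at this point, then $\ol\bgp<\bgg(S)$ is immediate from $\ol\bgp_{l,i},\ol\bgp_{r,j}<\bgg(S)$. In all cases $\con{\fp}\ne\bgg(S)$, which together with the first paragraph proves the lemma.
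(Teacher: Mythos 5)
Your proof is correct and takes essentially the same route as the paper's (much terser) proof, which likewise reads off from the class description \eqref{E:i}--\eqref{E:vi} that every prime interval collapsed by $\bgg(S)$ either lies in $G$ or generates a congruence below some $\ol\bgp_{l,i}$ or $\ol\bgp_{r,j}$, and then invokes $\ol\bgp_{l,i} < \bgg(S)$ from the preceding lemma. The one point you leave hedged is not a real obstacle: $\bgg(S)=\con{t,i}$ is join-irreducible because it is generated by a prime interval, so $\ol\bgp<\bgg(S)$ follows immediately from $\ol\bgp_{l,i},\ol\bgp_{r,j}<\bgg(S)$.
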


\begin{proof}
If we look at all the prime intervals $\fp$ collapsed by $\bgg(S)$
as listed in \eqref{E:i}--\eqref{E:vi}, then they are either 
listed in \eqref{E:seti}--\eqref{E:setiii} 
or they generate $\ol\bgp_{l,i}$.
\end{proof}

\begin{theorem}\label{T:tight}
Let $L$ be an SPS lattice. 
Let $S$ be a \emph{tight square}. 
Let $\fp = [u,v]$ be a prime interval of $L$.
Let us assume that $\bgg(S) < \con \fp$
in $L[S]$. Then either $\bga_l(S) \leq \con \fp$
or $\bga_r(S) \leq \con \fp$ in~$L$.
\end{theorem}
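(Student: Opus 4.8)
The plan is to move the whole question inside $L[S]$, peel the collapse of $[t,i]$ one layer down into the $\SfS 7$ over $S$, and then show that it cannot stop before reaching a boundary edge of $S$.

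First I would fix the dictionary between the two congruence lattices. Write $\bga=\consub{L}{u,v}$; then $\bga$ is join-irreducible in $L$ and, since $\fp\subseteq L$, the congruence $\consub{L[S]}{u,v}$ is exactly the extension $\oa$. By Theorem~\ref{T:extension} the map taking a congruence of $L$ to its smallest extension in $L[S]$ is an order-embedding whose left inverse is restriction to $L$; in particular $\oa$ identifies two elements of $L$ precisely when $\bga$ does. Inside the square $S$ the transposes give $\bga_l(S)=\consub{L}{a_l,i}=\consub{L}{o,a_r}$ and $\bga_r(S)=\consub{L}{a_r,i}=\consub{L}{o,a_l}$, so that $\ol\bga_l(S)=\consub{L[S]}{o,a_r}$ and $\ol\bga_r(S)=\consub{L[S]}{o,a_l}$. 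Hence the conclusion ``$\bga_l(S)\le\bga$ or $\bga_r(S)\le\bga$'' is equivalent to ``$\oa$ identifies $o$ with $a_r$ or with $a_l$''. On the hypothesis side $\bgg(S)=\consub{L[S]}{t,i}$, so $\bgg(S)<\oa$ says in particular that $\oa$ collapses $[t,i]$. (The inequality is automatically strict here, since $\oa$ is the extension of a join-irreducible congruence while $\bgg(S)$ is not the extension of any join-irreducible congruence of $L$.)

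Next I would propagate the collapse of $[t,i]$ into the fork. In the $\SfS 7$ associated with $S$ we have $z_{l,1}=a_l\mm t$ and $z_{r,1}=a_r\mm t$, so meeting $t\equiv i\pmod{\oa}$ with $a_l$ and with $a_r$ yields $z_{l,1}\equiv a_l$ and $z_{r,1}\equiv a_r\pmod{\oa}$. Now I split according to the two lower edges. Since $o\prec z_{l,1}\prec a_l$, either $o\equiv z_{l,1}$, and then $o\equiv a_l\pmod{\oa}$, so $\ol\bga_r(S)\le\oa$ and, restricting, $\bga_r(S)\le\bga$; or $o\not\equiv z_{l,1}$. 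Symmetrically, either $o\equiv z_{r,1}$, giving $\bga_l(S)\le\bga$, or $o\not\equiv z_{r,1}$. In either of the first alternatives we are done.

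The heart of the proof, and what I expect to be the main obstacle, is to eliminate the remaining case $o\not\equiv z_{l,1}$ and $o\not\equiv z_{r,1}$. Here $\oa$ coincides with $\bgg(S)$ on the seven elements of the $\SfS 7$, and its restriction $\bga$ collapses no edge of $S$; I must show that this is incompatible with $\oa$ collapsing $[t,i]$. I would argue by minimality of $\oa=\ol\bga$: it suffices to exhibit a congruence $\Phi$ of $L[S]$ with $\Phi|_L=\bga$ that keeps $t$ and $i$ apart, for then $\oa\le\Phi$ forces $t\not\equiv i\pmod{\oa}$, a contradiction. The candidate $\Phi$ is modelled on the definition of $\bgg_S$: it equals $\bga$ on $L$ and, on $F[S]$, collapses a fork edge only when $\bga$ already collapses the corresponding lower edge of $S$ or of a protrusion square, so that in particular $t\equiv i$ is never forced unless $\bga$ collapses a side of $S$. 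The delicate point---which is exactly why the statement is about a tight square---is the verification that $\Phi$ satisfies (C${}_{\jj}$) and (C${}_{\mm}$) of Lemma~\ref{L:technical} at the protrusions and their extensions; here one uses Lemma~\ref{L:known} together with the orientation recorded in Lemma~\ref{L:protrusion} (left protrusions feed $\ol\bga_r(S)$ and right protrusions feed $\ol\bga_l(S)$). Alternatively, the same incompatibility can be obtained by tracking the congruence-projectivity $\fp\cproj[t,i]$ furnished by Lemma~\ref{L:cproj} and showing, by induction on the number of fork insertions that build $L$ from a distributive lattice (Theorem~\ref{T:Structure Theorem}), exactly as in the proof of Lemma~\ref{L:pi}, that this chain may always be rerouted inside $L$ so as to pass through a boundary edge of $S$.
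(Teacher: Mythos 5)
Your opening reductions are correct: meeting $t\equiv i\pmod{\oa}$ with $a_l$ and $a_r$ does give $z_{l,1}\equiv a_l$ and $z_{r,1}\equiv a_r$, and the dictionary $\bga_r(S)=\consub{L}{o,a_l}$, $\bga_l(S)=\consub{L}{o,a_r}$ correctly translates the conclusion into ``$\oa$ collapses $[o,a_l]$ or $[o,a_r]$.'' But this step is essentially a restatement of the theorem, not progress on it: the entire content lies in eliminating the remaining case, where $\bga$ collapses no edge of $S$ yet $\oa$ collapses $[t,i]$, and there you only name two candidate strategies without executing either. The congruence $\Phi$ is never actually defined --- you do not say which classes it has at a protrusion $x_{l,i}$, at its extensions $x_{l,i+2},\dots,x_{l,i^*}$ (where the classes of $\bgg_S$ have size three and four and meet $L$ nontrivially), or on the edges $[z_{l,1},t]$, $[z_{l,i+1},z_{l,i}]$ --- and the verification of (C${}_{\jj}$) and (C${}_{\mm}$), which is exactly where tightness and the protrusion structure enter (compare the six-case analysis in the proof of Lemma~\ref{L:gamma}), is explicitly deferred, as is the check that $\Phi\supseteq\bga$ so that minimality of $\oa$ applies. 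As it stands the proposal proves only the easy half and asserts the hard half.

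For comparison, the paper argues in the opposite direction: from $\bgg(S)\leq\con{\fp}$ and Lemmas~\ref{L:cproj} and~\ref{L:G} it extracts a congruence-projectivity chain $\fp=\inv{e_0}{f_0}\cpersp\cdots\cpersp\inv{e_n}{f_n}=\fq\in G$ of minimal length and analyzes only its last perspectivity. Since $G$ is closed under prime perspectivity, minimality forces $\inv{e_{n-1}}{f_{n-1}}$ to contain $\fq$ properly, and the two resulting subcases give $e_{n-1}\leq z_{l,1}$ (whence $\con{\fp}\geq\bga_l(S)$) or $e_{n-1}\leq y_{l,i}$ (whence $\con{\fp}\geq\bga_r(S)$), the other configurations contradicting minimality of $n$. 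Your second alternative (rerouting the projectivity chain by induction on fork insertions) gestures toward this, but it too is left unproved; if you want to keep your $\Phi$-based architecture, you must write down $\Phi$ explicitly and carry out a case analysis of the same scale as Lemma~\ref{L:gamma}.
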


\begin{proof}
So let $\bgg(S) < \con \fp$ in $L[S]$ 
for a prime interval $\fp$ of $L[S]$. 
By~Lemmas \ref{L:cproj} and \ref{L:G}, 
there is a sequence of intervals in $L[S]$:
\begin{equation}\tag{S}
   \fp = \inv{e_0}{f_0} \cpersp \inv{e_1}{f_1} \cpersp 
      \cdots \cpersp \inv{e_n}{f_n} = \fq= [x_{l,i}, y_{l,i}] \in G.
\end{equation}
(or symmetrically), using the notation $t = x_{l,0}$, $i = y_{l,0}$.
We can assume that (S) was chosen to minimize $n$. 
In particular, $\cperspup$ and $\cperspdn$ alternate in~(S).


The interval $[e_{n-1}, f_{n-1}]$ 
has a prime subinterval $[e'_{n-1}, f'_{n-1}]$ perspective to~$\fq$. 
Since $G$ is a set of prime interval 
closed under prime perspectivity, 
it follows that $[e_{n-1}, f_{n-1}]$ has $\fq$ as a subinterval
by minimality.

We cannot have $[e'_{n-1}, f'_{n-1}] = [e_{n}, f_{n}]$ 
because this conflicts with the minimality of $n$. So there are two cases to consider.

First, let $e_{n-1} < e_n$ and $[e_{n-1}, f_n] \in G$. 
Then $f_{n-1} = f_n$.
If $[e_n, f_n] = [t,i]$, 
then $e_{n-1} \leq z_{l,1}$ (or symmetrically),
therefore, $\cng z_{l,1} = t(\con{\fp})$ 
and so $\con{\fp} \geq \bga_l$, as claimed.

If $[e_n, f_n] = [z_{l,i}, x_{l,i}]$ (or symmetrically), 
with $i \geq 1$, then $e_{n-1} < z_{l,i}$ 
and so $e_{n-1} \leq z_{l,i+1}$
or $e_{n-1} \leq y_{l,i}$. The first possibility contradicts the minimality of $i$, while the second yields that 
$\con{\fp} \geq \bga_r$, as claimed.

Second, let $e_{n-1} = e_n$ and $[e_{n-1}, f_n] \in G$.
There are two possibilities: 
\begin{align}
   \inv{e_{n-2}}{f_{n-2}} &\cperspup [{e_{n-1}}{f_{n-1}}]\label{E:up}\\
\intertext{and}
   \inv{e_{n-2}}{f_{n-2}} &\cperspdn [{e_{n-1}}{f_{n-1}}]\label{E:dn}
\end{align}
If \eqref{E:up} holds, 
then $e_{n-2} \leq x_{l,i} < x_{l,i} < f_{n-2}$
and $f_{n-2} \jj e_{n-1} = f_{n-1}$
(which implies that $x_{l,i}$ precedes the first protrusion),
contradicting the minimality of $n$.

If \eqref{E:dn} holds, then $e_{n-2} = y_{l,i}$ with $i \geq 1$ or $e_{n-2} = t$. 
In either case, $[{e_{n-2}}{f_{n-2}}]$ contains a member of $G$,
contradicting the minimality of $n$.
\end{proof}

\begin{corollary}\label{C:tight}
Let $S$ be a \emph{tight square} 
in an SPS lattice $L$.
Then the congruence~$\bgg(S)$ of~$L[S]$ is covered by
one or two congruences in $\Ji (\Con L[S])$, 
namely, by $\ol\bga_l(S)$ and $\ol\bga_r(S)$.
\end{corollary}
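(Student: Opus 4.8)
\emph{Proof proposal.} The plan is to read off the corollary from the preceding Theorem~\ref{T:tight}, which governs every join-irreducible congruence of $L[S]$ lying strictly above $\bgg(S)$. In the finite lattice $L[S]$ every join-irreducible congruence has the form $\con\fp$ for a prime interval $\fp$. Recall that $\bgg(S)$ is join-irreducible, and that $\ol\bga_l(S)$ and $\ol\bga_r(S)$ are join-irreducible as well, being the minimal extensions of the join-irreducible congruences $\bga_l(S)=\consub{L}{a_l,i}$ and $\bga_r(S)=\consub{L}{a_r,i}$ of $L$. So all three lie in the ordered set $\Ji(\Con L[S])$, and the task is to identify the covers of $\bgg(S)$ there.

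First I would check the two strict inequalities $\bgg(S)<\ol\bga_l(S)$ and $\bgg(S)<\ol\bga_r(S)$. Working inside the $\SfS 7$ sublattice $\set{o,z_{l,1},z_{r,1},a_l,a_r,t,i}$ associated with $S$, collapsing $[a_l,i]$ forces $a_r\equiv o$, whence $i=z_{l,1}\jj a_r\equiv z_{l,1}\jj o=z_{l,1}$; since $z_{l,1}\le t\le i$, this collapses $[t,i]$ as well. Hence $\bgg(S)=\con{t,i}\le\consub{L[S]}{a_l,i}=\ol\bga_l(S)$, and symmetrically $\bgg(S)\le\ol\bga_r(S)$. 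For strictness it suffices that $\con{a_l,i}\neq\bgg(S)$ and $\con{a_r,i}\neq\bgg(S)$; this holds because $[a_l,i],[a_r,i]\notin G$, so Lemma~\ref{L:G} gives $\con{a_l,i}\neq\bgg(S)\neq\con{a_r,i}$.

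Next I would feed the preceding Theorem~\ref{T:tight} into $\Ji(\Con L[S])$. Let $\psi$ be join-irreducible with $\bgg(S)<\psi$ and write $\psi=\con\fp$. Then $\fp$ satisfies the hypothesis of that theorem, and its conclusion amounts to $\ol\bga_l(S)\le\con\fp$ or $\ol\bga_r(S)\le\con\fp$ (the translation is discussed below). Thus every join-irreducible congruence above $\bgg(S)$ lies above $\ol\bga_l(S)$ or above $\ol\bga_r(S)$. Combined with the previous paragraph, this exhibits $\ol\bga_l(S)$ and $\ol\bga_r(S)$ as the minimal members among the join-irreducible congruences strictly above $\bgg(S)$, that is, as the covers of $\bgg(S)$ in $\Ji(\Con L[S])$; these number two when the congruences are distinct and one when they coincide.

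The delicate point, and the one I would write out most carefully, is the translation at the $L$--$L[S]$ interface: Theorem~\ref{T:tight} is stated with the $L$-congruences $\bga_l(S),\bga_r(S)$, and one must reinterpret ``$\bga_l(S)\le\con\fp$'' as the $L[S]$-inequality $\ol\bga_l(S)\le\con\fp$, using that an extension of a congruence collapsing $[a_l,i]$ must contain the minimal such extension $\ol\bga_l(S)$. A secondary check, ensuring that $\ol\bga_l(S)$ and $\ol\bga_r(S)$ are genuine covers rather than mere upper bounds, is that no join-irreducible can sit strictly between $\bgg(S)$ and $\ol\bga_l(S)$: such a congruence would, by the dichotomy above, have to lie above $\ol\bga_r(S)$, forcing $\ol\bga_r(S)<\ol\bga_l(S)$ and merely collapsing the count to a single cover, still among the two named congruences.
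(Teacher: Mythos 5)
Your proof is correct and follows the route the paper intends: the corollary is stated there without an explicit proof, as an immediate consequence of the preceding Theorem~\ref{T:tight}, and your argument supplies exactly that derivation --- the strict inequalities $\bgg(S)<\ol\bga_l(S),\ol\bga_r(S)$ (via the associated $\SfS 7$ and Lemma~\ref{L:G}) together with the dichotomy of Theorem~\ref{T:tight} identifying the minimal join-irreducibles above $\bgg(S)$.
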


\end{document}